\colorlet{darkblue}{blue!55!black}
\colorlet{darkcyan}{cyan!50!black}
\colorlet{darkgreen}{green!60!black}
\def\eqref#1{\textcolor{darkblue}{(\ref{#1})}}
\let\oldequation\equation
\let\oldendequation\endequation
\let\expandafter\oldequationstar\csname equation*\endcsname
\let\expandafter\oldendequationstar\csname endequation*\endcsname
\renewenvironment{equation*}{\linenomathNonumbers\oldequationstar}{\oldendequationstar\endlinenomath}
\let\oldalign\align
\let\oldendalign\endalign
\let\expandafter\oldalignstar\csname align*\endcsname
\let\expandafter\oldendalignstar\csname endalign*\endcsname
\renewenvironment{align*}{\linenomathNonumbers\oldalignstar}{\oldendalignstar\endlinenomath}
\newcounter{intro}
\newcounter{result}
\newtheorem{introthm}[intro]{Theorem}
\theoremstyle{definition}
\newtheorem{theorem}[result]{Theorem}
\newtheorem{lemma}[result]{Lemma}
\newtheorem{proposition}[result]{Proposition}
\newtheorem{corollary}[result]{Corollary}
\newtheorem{definition}[result]{Definition}
\newtheorem{example}[result]{Example}
\newtheorem{remark}[result]{Remark}
\newtheorem{setup}[result]{Setup}
\newtheorem*{ack}{Acknowledgements}
\numberwithin{equation}{section}
\numberwithin{result}{section}
\title[Triangulated characterizations of singularities]{Triangulated characterizations of singularities}
\author[P.~Lank]{Pat Lank}
\address{P.~Lank,
Dipartimento di Matematica “F. Enriques”, Universit\`{a} degli Studi di Milano, Via Cesare
Saldini 50, 20133 Milano, Italy}
\email{plankmathematics@gmail.com}
\author[S.~Venkatesh]{Sridhar Venkatesh}
\address{S.~Venkatesh,
Department of Mathematics,
University of Michigan, 
Ann Arbor, MI 48109,
U.S.A.}
\email{srivenk@umich.edu}
\date{\today}
\keywords{Triangulated categories, Rouquier dimension, derived splinters, rational singularities, Du Bois singularities}
\subjclass[2020]{14F08 (primary), 14B05 (secondary), 14F17, 14A30, 14E15, 18G80} 
\begin{document}
    
\begin{abstract}
    This work presents a range of triangulated characterizations for important classes of singularities such as derived splinters, rational singularities, and Du Bois singularities. An invariant called 'level' in a triangulated category can be used to measure the failure of a variety to have a prescribed singularity type. We provide explicit computations of this invariant for reduced Nagata schemes of Krull dimension one and for affine cones over smooth projective hypersurfaces. Furthermore, these computations are utilized to produce upper bounds for Rouquier dimension on the respective bounded derived categories.
\end{abstract}

\maketitle

\section{Introduction}
\label{sec:intro}
 
This article presents a technical result concerning the splitting of the natural map $\mathcal{O}_X \to \mathbb{R}\pi_\ast \mathcal{O}_Y$ in the bounded derived category of coherent sheaves on $X$, for a given proper surjective morphism $\pi\colon Y \to X$ of Noetherian schemes (see Lemma~\ref{lem:splitting_via_thick_tricat}). Our result offers a new perspective and set of tools for detecting various important classes of singularities studied in algebraic geometry.

We employ a concept called generation in a triangulated category $\mathcal{T}$, which has been extensively studied \cite{BVdB:2003, ABIM:2010, Rouquier:2008}. Let $E$ be an object and $\mathcal{C}$ a subcategory of $\mathcal{T}$. We say that $\mathcal{C}$ \textit{finitely builds} $E$ if $E$ can be obtained from objects in $\mathcal{C}$ using a finite combination of coproducts, shifts, direct summands, and cones. The minimal number of cones required is the \textit{level} of $E$ with respect to $\mathcal{C}$, denoted $\operatorname{level}^{\mathcal{C}}(E)$. A special case of interest is the case where $\mathcal{C}$ consists of a single object. The \textit{Rouquier dimension} of $\mathcal{T}$ is the smallest integer $n$ such that there exists an object $G$ where $\operatorname{level}^G (E) \leq n+1$ for all objects $E$. See Section~\ref{sec:generation} for details.

Let $D^b_{\operatorname{coh}}(X)$ denote the bounded derived category of coherent sheaves on a Noetherian scheme $X$. For a proper surjective morphism $\pi\colon Y\to X$, the level of $\mathcal{O}_X$ with respect to $\mathbb{R}\pi_\ast D^b_{\operatorname{coh}}(Y)$ (i.e. the essential image of $\mathbb{R}\pi_\ast \colon D^b_{\operatorname{coh}}(Y) \to D^b_{\operatorname{coh}}(X)$) is always finite, see \cite[Theorem 1.1]{Dey/Lank:2024}. We will show that, under mild constraints, the splitting of the natural map $\mathcal{O}_X \to \mathbb{R}\pi_\ast \mathcal{O}_Y$ is equivalent to the level of $\mathcal{O}_X$ with respect to $\mathbb{R}\pi_\ast D^b_{\operatorname{coh}}(Y)$ being at most one in $D^b_{\operatorname{coh}}(X)$. The splitting of this natural map, in the appropriate settings, is used to define various classes of singularities. 

To set the stage, let us consider singularities in prime characteristic. Suppose $X$ is a variety over a perfect field of prime characteristic, and denote its Frobenius morphism by $F \colon X \to X$. We say that $X$ is \textit{globally $F$-split} if the natural map $F \colon X \to X$ splits as $\mathcal{O}_X$-modules, see \cite{Mehta/Ramanathan:1985}. In \cite{BILMP:2023}, techniques of generation were utilized to prove that when $X$ is affine, it is globally $F$-split if, and only if, the level of $\mathcal{O}_X$ with respect to $F_\ast \mathcal{O}_X$ is at most one. This alludes one to characterizing a prescribed singularity type using generation. We push this line of thought further by studying classes of singularities in various contexts through the lens of generation in a triangulated category.

Let us shift gears by highlighting classes of singularities that are of interest to our work; for background, see Section~\ref{sec:singularities}:
\begin{itemize}
    \item A variety $X$ over a field of characteristic zero has \textit{rational singularities} if, given a resolution of singularities $\pi\colon \widetilde{X}\to X$, the natural map $\mathcal{O}_X \to \mathbb{R}\pi_\ast \mathcal{O}_{\widetilde{X}}$ splits in $D^b_{\operatorname{coh}}(X)$ \cite{Kovac:2000, Bhatt:2012}.
    \item A variety over a field is said to be a \textit{derived splinter} variety $X$ if the natural map $\mathcal{O}_X \to \mathbb{R}\pi_\ast \mathcal{O}_Y$ splits in $D^b_{\operatorname{coh}}(X)$ for all proper surjective morphisms $\pi \colon Y \to X$ \cite{Bhatt:2012}.
    \item A variety $X$ over a field of characteristic zero is said to have \textit{Du Bois singularities} if the natural map $\mathcal{O}_X \to \underline{\Omega}^0_X$ splits in $D^b_{\operatorname{coh}}(X)$ \cite{Schwede:2007, Kovacs:1999}, where $\underline{\Omega}^0_X$ denotes the $0$-th graded piece of the Du Bois complex of $X$.
\end{itemize}

These classes of singularities are characterized by a splitting condition, which naturally fits into the context of generation in $D^b_{\operatorname{coh}}(X)$ as they tell us that $\mathcal{O}_X$ is a direct summand of a particular object. We now state our first main result, which is a special case of Theorem~\ref{thm:derived_splinters_via_generation}, Theorem~\ref{thm:rational_singularities_via_generation}, Proposition~\ref{prop:rational_singularities_perfect}, and Theorem~\ref{thm:du_bois_via_generation}.

\vspace{3mm}

\begin{introthm}\label{introthm:rational_derived_splinter_db_generation}
    Let $X$ be a variety over a field $k$.
    \begin{enumerate}
        \item The following are equivalent:
        \begin{enumerate}
            \item $X$ is a derived splinter
            \item $\operatorname{level}^{\mathbb{R}\pi_\ast D^b_{\operatorname{coh}}(Y)} (\mathcal{O}_X) \leq 1$ for all proper surjective morphisms $\pi \colon Y \to X$.
        \end{enumerate}

        \item Assume $k$ has characteristic zero. If $\pi \colon \widetilde{X} \to X$ is a resolution of singularities, then the following are equivalent:
        \begin{enumerate}
            \item $X$ has rational singularities
            \item $\operatorname{level}^{\mathbb{R}\pi_\ast D^b_{\operatorname{coh}}(\widetilde{X})} (\mathcal{O}_X)\leq 1$
            \item $\mathbb{R}\pi_\ast \mathcal{O}_{\widetilde{X}}$ is a perfect complex on $X$.
        \end{enumerate}

        \item Assume $k$ has characteristic zero. If $X$ is an embeddable variety, then the following are equivalent:
        \begin{enumerate}
            \item $X$ has Du Bois singularities
            \item $\operatorname{level}^{\underline{\Omega}^0_X} (\mathcal{O}_X) \leq 1$.
        \end{enumerate}
    \end{enumerate}
\end{introthm}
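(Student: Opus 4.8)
The three parts share a common strategy: each asserts that a class of singularities, defined by the splitting of a canonical morphism $\mathcal{O}_X \to M$ in $D^b_{\operatorname{coh}}(X)$, is detected by the condition $\operatorname{level}(\mathcal{O}_X) \leq 1$ against the appropriate subcategory. The bridge is Lemma~\ref{lem:splitting_via_thick_tricat}, which I would use as follows. If $\mathcal{C} \subseteq D^b_{\operatorname{coh}}(X)$ is closed under shifts and finite coproducts — as $\mathbb{R}\pi_\ast D^b_{\operatorname{coh}}(Y)$ is — then $\operatorname{level}^{\mathcal{C}}(\mathcal{O}_X) \leq 1$ is equivalent to $\mathcal{O}_X$ being a direct summand of some object of $\mathcal{C}$ (and for a single generator $E$, to being a summand of a finite coproduct of shifts of $E$). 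The nontrivial point that the Lemma supplies is that when $M = \mathbb{R}f_\ast\mathcal{O}_{\widetilde{X}}$ for a morphism $f$ with $\mathbb{L}f^\ast\mathcal{O}_X \cong \mathcal{O}_{\widetilde X}$ and the canonical map is the unit $u\colon \mathcal{O}_X \to \mathbb{R}f_\ast\mathbb{L}f^\ast\mathcal{O}_X$, any retraction $r\colon \bigoplus_i M[n_i] \to \mathcal{O}_X$ can be promoted to a retraction of $u$ itself: writing the corresponding section through the adjunction $\mathbb{L}f^\ast \dashv \mathbb{R}f_\ast$ as $\mathbb{R}f_\ast(a')\circ u$, one checks that $r\circ\mathbb{R}f_\ast(a')\colon M\to\bigoplus_i M[n_i]\to\mathcal{O}_X$ retracts $u$. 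Hence ``splits'' and ``$\operatorname{level}\leq 1$'' coincide.

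Granting this, part (1) is formal: $X$ is a derived splinter precisely when $\mathcal{O}_X \to \mathbb{R}\pi_\ast\mathcal{O}_Y$ splits for all proper surjective $\pi\colon Y\to X$, and applying the Lemma to each $\pi$ turns this into the stated level condition, recovering the relevant case of Theorem~\ref{thm:derived_splinters_via_generation}. For part (2), the equivalence of (a) and (b) is the same argument, now invoking the theorem of Kov\'acs and Bhatt \cite{Kovac:2000, Bhatt:2012} that $X$ has rational singularities if and only if $\mathcal{O}_X \to \mathbb{R}\pi_\ast\mathcal{O}_{\widetilde X}$ splits for one, equivalently every, resolution. The implication (a) $\Rightarrow$ (c) is immediate, since rational singularities means $\mathbb{R}\pi_\ast\mathcal{O}_{\widetilde X}\simeq\mathcal{O}_X$, which is perfect.

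I expect (c) $\Rightarrow$ (a) of part (2), i.e. Proposition~\ref{prop:rational_singularities_perfect}, to be the main obstacle. The plan: assume $\mathbb{R}\pi_\ast\mathcal{O}_{\widetilde X}$ is perfect and let $Q$ be the cone of the canonical map $\mathcal{O}_X\to\mathbb{R}\pi_\ast\mathcal{O}_{\widetilde X}$. Since perfect complexes form a thick subcategory, $Q$ is perfect; a cohomology sequence identifies $\mathcal{H}^0(Q)=\pi_\ast\mathcal{O}_{\widetilde X}/\mathcal{O}_X$ and $\mathcal{H}^i(Q)=R^i\pi_\ast\mathcal{O}_{\widetilde X}$ for $i>0$, so $Q$ is supported on the non-rational locus, a proper closed $Z\subsetneq X$. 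I would then show a nonzero perfect complex cannot be supported on $Z$: localizing at a generic point of its support gives a nonzero perfect complex with finite-length cohomology over a non-regular local ring, which is excluded by the Auslander--Buchsbaum formula together with the New Intersection Theorem when that local ring has dimension $\geq 2$ (the case of the higher direct images, supported in the singular locus, of codimension $\geq 2$ once $X$ is known normal), while the codimension-one case — failure of normality — is ruled out because $\pi_\ast\mathcal{O}_{\widetilde X}/\mathcal{O}_X$ cannot be a nonzero perfect module over a one-dimensional reduced non-regular local ring. Thus $Q=0$, so $\mathbb{R}\pi_\ast\mathcal{O}_{\widetilde X}\simeq\mathcal{O}_X$ and $X$ has rational singularities. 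A cleaner alternative is to apply Grothendieck--Serre duality and Grauert--Riemenschneider vanishing to get $\mathbb{R}\mathcal{H}om_X(\mathbb{R}\pi_\ast\mathcal{O}_{\widetilde X},\omega_X^\bullet)\simeq(\pi_\ast\omega_{\widetilde X})[\dim X]$ and to deduce from perfectness that $\omega_X^\bullet$ is a shifted sheaf (so $X$ is Cohen--Macaulay) with $\pi_\ast\omega_{\widetilde X}\cong\omega_X$, which is the classical criterion.

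For part (3), I would first record that an embeddable $X$ admits a hyperresolution $\epsilon\colon X_\bullet\to X$ by smooth schemes, and that by construction $\underline{\Omega}^0_X\simeq\mathbb{R}\epsilon_\ast\mathcal{O}_{X_\bullet}$ with the canonical map $\mathcal{O}_X\to\underline{\Omega}^0_X$ equal to the unit of $\mathbb{L}\epsilon^\ast\dashv\mathbb{R}\epsilon_\ast$ (and $\mathbb{L}\epsilon^\ast\mathcal{O}_X=\mathcal{O}_{X_\bullet}$). Then the mechanism of the first paragraph applies with $\epsilon$ in place of $\pi$: since $\bigoplus_i\underline{\Omega}^0_X[n_i]\simeq\mathbb{R}\epsilon_\ast\bigl(\bigoplus_i\mathcal{O}_{X_\bullet}[n_i]\bigr)$, a retraction of such a coproduct onto $\mathcal{O}_X$ yields, through the adjunction, a retraction of the unit, while conversely a Du Bois splitting exhibits $\mathcal{O}_X$ as a summand of $\underline{\Omega}^0_X$. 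Hence $X$ is Du Bois if and only if $\operatorname{level}^{\underline{\Omega}^0_X}(\mathcal{O}_X)\leq 1$, giving the relevant case of Theorem~\ref{thm:du_bois_via_generation}. The only real care needed is in handling the adjunction for simplicial schemes and the identification of $\underline{\Omega}^0_X$, both standard in the theory of the Du Bois complex; otherwise this part is as soft as part (1).
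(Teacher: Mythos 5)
Your adjunction mechanism for promoting a retraction of $\bigoplus_i \mathbb{R}f_\ast\mathcal{O}_{\widetilde X}[n_i]$ onto $\mathcal{O}_X$ to a retraction of the unit is correct, and in fact it directly generalizes: if $\mathcal{O}_X$ is a retract of $\mathbb{R}\pi_\ast E$ for an arbitrary $E \in D^b_{\operatorname{coh}}(Y)$ (which is all that $\operatorname{level}^{\mathbb{R}\pi_\ast D^b_{\operatorname{coh}}(Y)}(\mathcal{O}_X)\leq 1$ gives you --- the essential image need not consist of coproducts of shifts of $\mathbb{R}\pi_\ast\mathcal{O}_Y$), then the section $s\colon\mathcal{O}_X\to\mathbb{R}\pi_\ast E$ factors through the unit $u\colon\mathcal{O}_X\to\mathbb{R}\pi_\ast\mathcal{O}_Y$ as $s=\mathbb{R}\pi_\ast(\tilde s)\circ u$ precisely because $\mathbb{L}\pi^\ast\mathcal{O}_X\cong\mathcal{O}_Y$, so composing with the retraction splits $u$. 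You should make that generality explicit, since as written your argument only covers retracts of shifted coproducts of a single $M$. The paper instead packages this via Lemma~\ref{lem:splitting_via_thick_tricat}, an abstract statement about algebra objects and modules in a symmetric monoidal triangulated category ($\mathbb{R}\pi_\ast E$ is a $\mathbb{R}\pi_\ast\mathcal{O}_Y$-module), proved by base-change along $A\to M$ and the action map. The two arguments are morally identical --- unit of adjunction versus unit of algebra --- so parts (1), (2)(a)$\Leftrightarrow$(b), and (3) are fine by your route, with the caveat that the paper's proof of (3) goes through the embeddable hypothesis and the log-resolution description of $\underline\Omega^0_X$ (Remark~\ref{rmk:du_bois_properties}), which is where the embeddability assumption is actually used.

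The real gap is in (2)(c)$\Rightarrow$(a). Your primary plan rests on the assertion that ``a nonzero perfect complex cannot be supported on a proper closed $Z$'', which is false as stated (Koszul complexes on systems of parameters are perfect with finite-length cohomology), and the codimension-one step --- that $\pi_\ast\mathcal{O}_{\widetilde X}/\mathcal{O}_X$ cannot be nonzero of finite projective dimension over a one-dimensional reduced non-regular local ring --- is only true for that particular module (via freeness and generic rank of $\pi_\ast\mathcal{O}_{\widetilde X}$, not because such finite-length modules never exist), a distinction you elide. To run the New Intersection strategy you would need to compare the cohomological amplitude of $Q_\mathfrak{p}$ (which is at most $\operatorname{codim}\mathfrak{p}-1$, using the fiber-dimension bound for a proper birational map) with $\dim\mathcal{O}_{X,\mathfrak{p}}$, and you never state, let alone prove, the amplitude inequality needed; the New Intersection Theorem as usually formulated bounds the length of the free complex, not the cohomological amplitude, and bridging that gap is not free. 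The paper sidesteps all of this with Lemma~\ref{lem:bhatt_lemma}: since $E=\mathbb{R}\pi_\ast\mathcal{O}_Y$ is a perfect algebra, the composite $\mathcal{O}_X\to E\to\mathbb{R}\mathcal{H}\mathrm{om}_X(E,E)\xrightarrow{\operatorname{tr}}\mathcal{O}_X$ is the identity because it is so generically (by birationality), which immediately splits the canonical map without ever needing to show $Q=0$ cohomology by cohomology. Your ``cleaner alternative'' via duality and Grauert--Riemenschneider is a plausible classical route (Kempf's criterion), but you would still need to justify why perfectness forces $\omega_X^\bullet$ to be concentrated, which is not supplied.
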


Theorem~\ref{introthm:rational_derived_splinter_db_generation} is a special case of more general statements found in Section~\ref{sec:proofs}. The key technical ingredient that bridge the splitting conditions to statements regarding generation is Lemma~\ref{lem:splitting_via_thick_tricat}. In forthcoming work, we will establish similar results regarding singularities of pairs \cite{Lank/McDonald/Venkatesh:2025}. This perspective of utilizing generation in triangulated categories allows one to use level as an invariant to measure the failure of a prescribed singularity type.

Along the way, we discover that it is possible to make similar assertions about the normality of an integral Nagata scheme in this context. This is particularly relevant for varieties over a field when investigating their potential for normality.

\begin{introthm}\label{introthm:normality_via_generation}
    If $X$ is an integral Nagata scheme, then the following are equivalent:
    \begin{enumerate}
        \item $X$ is normal
        \item $\operatorname{level}^{\mathbb{R} \nu_\ast \mathcal{O}_{X^\nu}} (\mathcal{O}_X) \leq 1$ where $\nu: X^\nu \to X$ is the normalization map.
    \end{enumerate}
\end{introthm}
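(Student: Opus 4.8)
The plan is to reduce everything to Lemma~\ref{lem:splitting_via_thick_tricat} by exploiting that the normalization is a finite morphism. Since $X$ is an integral Nagata scheme, the normalization $\nu\colon X^\nu\to X$ is finite; in particular it is proper and surjective, and $\nu_\ast\mathcal{O}_{X^\nu}$ is a coherent $\mathcal{O}_X$-module. Thus $\mathbb{R}\nu_\ast\mathcal{O}_{X^\nu}=\nu_\ast\mathcal{O}_{X^\nu}$ is an object of $D^b_{\operatorname{coh}}(X)$ concentrated in cohomological degree zero, and Lemma~\ref{lem:splitting_via_thick_tricat} applies to $\pi=\nu$: condition (2) holds if and only if the natural map $\iota\colon\mathcal{O}_X\to\nu_\ast\mathcal{O}_{X^\nu}$ splits in $D^b_{\operatorname{coh}}(X)$.

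The implication (1)$\Rightarrow$(2) is then immediate: if $X$ is normal, $\nu$ is an isomorphism, so $\iota$ is an isomorphism, hence split, and therefore $\operatorname{level}^{\mathbb{R}\nu_\ast\mathcal{O}_{X^\nu}}(\mathcal{O}_X)\leq 1$.

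For (2)$\Rightarrow$(1), suppose $\iota$ splits in $D^b_{\operatorname{coh}}(X)$. Because both $\mathcal{O}_X$ and $\nu_\ast\mathcal{O}_{X^\nu}$ sit in cohomological degree zero, $\operatorname{Hom}$ between them in $D^b_{\operatorname{coh}}(X)$ agrees with $\operatorname{Hom}$ in the category of $\mathcal{O}_X$-modules, and similarly for endomorphisms of $\mathcal{O}_X$; hence a derived-category retraction of $\iota$ is induced by an honest $\mathcal{O}_X$-linear retraction, and $\nu_\ast\mathcal{O}_{X^\nu}\cong\mathcal{O}_X\oplus\mathcal{Q}$ as $\mathcal{O}_X$-modules, where $\mathcal{Q}=\operatorname{coker}\iota$. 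Now $X$ is integral, so $\nu$ is birational; consequently $\nu_\ast\mathcal{O}_{X^\nu}$ is a torsion-free $\mathcal{O}_X$-module (it is a subsheaf of the function field $K(X)$ viewed as a constant sheaf), while $\mathcal{Q}$ is a torsion sheaf since $\iota$ is an isomorphism at the generic point. A direct summand of a torsion-free module is torsion-free, so $\mathcal{Q}$ is simultaneously torsion and torsion-free, forcing $\mathcal{Q}=0$. Hence $\iota$ is an isomorphism and $X=X^\nu$ is normal. (If one prefers, this last step can be checked affine-locally: for a Noetherian domain $A$ with module-finite normalization $B$, an $A$-linear retraction $B\to A$ exhibits $B\cong A\oplus(B/A)$, and torsion-freeness of $B$ over $A$ against the fact that every element of $B/A$ is annihilated by a nonzero element of $A$ forces $B=A$.)

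The only point requiring genuine care is the transition from a splitting in $D^b_{\operatorname{coh}}(X)$ to a module-theoretic splitting — which works precisely because $\nu$ is finite, so $\mathbb{R}\nu_\ast\mathcal{O}_{X^\nu}$ is a single sheaf — together with the verification that $\nu$ meets the hypotheses of Lemma~\ref{lem:splitting_via_thick_tricat} (properness, surjectivity, and coherence of $\nu_\ast\mathcal{O}_{X^\nu}$, all consequences of $X$ being integral and Nagata); the remaining torsion argument is routine.
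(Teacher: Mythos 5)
Your argument is correct and follows essentially the same route as the paper's proof: use that $\nu$ is finite (hence $\mathbb{R}\nu_\ast=\nu_\ast$ lands in degree zero), apply Lemma~\ref{lem:splitting_via_thick_tricat} to translate the level condition into a splitting of $\mathcal{O}_X\to\nu_\ast\mathcal{O}_{X^\nu}$, pass to a splitting in $\operatorname{coh}X$, and then conclude by observing that the cokernel is a torsion direct summand of a torsion-free rank-one sheaf, hence zero. The paper's proof is identical in substance, down to the same torsion/torsion-free contradiction at the generic point.
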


Furthermore, we provide explicit computations yielding bounds on the number of cones required to finitely build $\mathcal{O}_X$ in $D^b_{\operatorname{coh}}(X)$ from a given resolution of singularities $\pi \colon \widetilde{X} \to X$ for a variety $X$ over a field. The examples of interest include quasi-projective curves and affine cones over a smooth projective hypersurface over a field of characteristic zero. These calculations measure the failure to have rational singularities and provide upper bounds on Rouquier dimension for the corresponding bounded derived categories, see Proposition~\ref{prop:rouquier_dimension_curves}, Corollary~\ref{cor:affine_curve_level}, Proposition~\ref{prop:cones_inequality} and Corollary~\ref{cor:cones_rouq_dim}.

\begin{introthm}
    \begin{enumerate}
        \item  Let $X$ be a reduced Nagata one-dimensional scheme. If $\nu \colon X^\nu \to X$ is the normalization of $X$, then the Rouquier dimension of $D^b_{\operatorname{coh}}(X)$ is bounded above by the following value:
        \begin{displaymath}
            (\dim D^b_{\operatorname{coh}}(X^\nu) +1 )(1 + \underset{s \in \operatorname{Sing}(X)}{\max} \{ \delta_s\}) -1
        \end{displaymath}
        where $\delta_s$ denotes the $\delta$-invariant of $X$ at $s$. If $X$ is affine, then one has the following:
        \begin{displaymath}
            \operatorname{level}^{\mathbb{R}\nu_\ast \mathcal{O}_{X^\nu}}(\mathcal{O}_X)\leq 1 + \underset{s \in \operatorname{Sing}(X)}{\max} \{ \delta_s\}.
        \end{displaymath}
        \item Let $X$ be a smooth projective hypersurface in $\mathbb{P}^n_k$ of degree $d \geq n$, where $k$ is a field of characteristic zero. Let $C$ be the affine cone of $X$ associated to an ample line bundle on $X$. Then
        \begin{displaymath}
            \dim D^b_{\operatorname{coh}} (C) \leq (2 \dim C + 1) (1 + 2(d-n)) -1.
        \end{displaymath}
        Moreover, there is an inequality:
        \begin{displaymath}
        \operatorname{level}^{\mathbb{R}\pi_\ast(\mathcal{O}_{\widetilde{C}} \oplus \mathcal{O}_{\widetilde{C}}(E))}(\mathcal{O}_C) \leq 1+ 2(d-n).
        \end{displaymath}
    \end{enumerate}
\end{introthm}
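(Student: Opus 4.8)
The plan is to run both parts through one common mechanism and to isolate the genuinely hard input. The mechanism is a \emph{level transfer}: for a proper surjective $\pi\colon Y\to X$ the essential image $\mathbb R\pi_\ast D^b_{\operatorname{coh}}(Y)$ generates $D^b_{\operatorname{coh}}(X)$ by \cite[Theorem 1.1]{Dey/Lank:2024}, and since $\mathbb R\pi_\ast$ is triangulated it does not raise levels; composing iterated thick closures multiplicatively, a bound $\operatorname{level}^{\mathbb R\pi_\ast D^b_{\operatorname{coh}}(Y)}(\mathcal O_X)\le\lambda$ together with a classical generator $G'$ of $D^b_{\operatorname{coh}}(Y)$ produces a generator $\mathbb R\pi_\ast G'$ of $D^b_{\operatorname{coh}}(X)$ with $\dim D^b_{\operatorname{coh}}(X)\le(\dim D^b_{\operatorname{coh}}(Y)+1)\lambda-1$. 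Granting this, the theorem reduces to the two level computations $\operatorname{level}^{\mathbb R\nu_\ast\mathcal O_{X^\nu}}(\mathcal O_X)\le 1+\max_s\delta_s$ and $\operatorname{level}^{\mathbb R\pi_\ast(\mathcal O_{\widetilde C}\oplus\mathcal O_{\widetilde C}(E))}(\mathcal O_C)\le 1+2(d-n)$, namely \Cref{cor:affine_curve_level} and \Cref{prop:cones_inequality}, fed into the transfer with the inputs $\dim D^b_{\operatorname{coh}}(X^\nu)$ for the regular scheme $X^\nu$ in part~(1) and $\dim D^b_{\operatorname{coh}}(\widetilde C)\le 2\dim\widetilde C=2\dim C$ (valid since $\widetilde C$ is smooth quasi-projective, \cite{Rouquier:2008}) in part~(2); these yield \Cref{prop:rouquier_dimension_curves} and \Cref{cor:cones_rouq_dim}.

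For part~(1): as $X$ is Nagata, $\nu$ is finite, so $\mathbb R\nu_\ast=\nu_\ast$, and $X^\nu$ is regular, being normal of dimension one. Affine-locally put $R=\mathcal O_{X,s}$ and $\bar R=(\nu_\ast\mathcal O_{X^\nu})_s$, and use $0\to\mathcal O_X\to\nu_\ast\mathcal O_{X^\nu}\to\mathcal Q\to 0$, where $\mathcal Q$ is supported on the finite set $\operatorname{Sing}(X)$ with $\operatorname{length}_{\mathcal O_{X,s}}\mathcal Q_s=\delta_s$ and is the coproduct of its stalks. The associated triangle gives $\operatorname{level}^{\{\nu_\ast\mathcal O_{X^\nu}\}}(\mathcal O_X)\le 1+\max_s\operatorname{level}^{\{\bar R\}}(\bar R/R)$, so one must show $\operatorname{level}^{\{\bar R\}}(\bar R/R)\le\delta_s$. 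The lever is the conductor $\mathfrak c$: it is an invertible, hence free, ideal of the semilocal principal ideal ring $\bar R$, so $\bar R/\mathfrak c\simeq\operatorname{cone}(\bar R\xrightarrow{\;c\;}\bar R)$ sits one cone away from $\bar R$, and, crucially, as an $R$-module $\bar R/\mathfrak c$ carries the required finite-length data as direct summands even though these summands are not $\bar R$-submodules. Interpolating between $R$ and $\bar R$ by the chain of partial normalizations $R=R_0\subsetneq R_1\subsetneq\cdots\subsetneq R_{\delta_s}=\bar R$ with simple successive quotients and processing one step at a time, reusing this observation, produces the bound $\delta_s$; globalization is automatic because $\nu$ is an isomorphism over $X\smallsetminus\operatorname{Sing}(X)$, and the Rouquier estimate follows from the transfer with $\lambda=1+\max_s\delta_s$. (The case $\delta_s\equiv 0$ recovers \Cref{introthm:normality_via_generation}.)

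For part~(2): with $C$ the affine cone over $(X,L)$ and $\pi\colon\widetilde C\to C$ the blow-up of the vertex — so $\widetilde C$ is the total space of a line bundle over $X$, smooth quasi-projective, with exceptional divisor $E\cong X$ — the derived pushforward $\mathbb R\pi_\ast\mathcal O_{\widetilde C}(mE)$ is computed from the cohomology on $X$ of the twists $L^{j}$ occurring to $m$-th order along $E$; since $X$ is a hypersurface only the degree $\dim X=n-1$ part contributes, and by Serre duality it is governed by $H^{0}(X,\mathcal O_X(d-n-1-\cdot))$, which is exhausted after at most $d-n$ twists. Threading the short exact sequences $0\to\mathcal O_{\widetilde C}(mE)\to\mathcal O_{\widetilde C}((m{\pm}1)E)\to\mathcal O_E((m{\pm}1)E)\to 0$, whose last terms are powers of the (anti)tautological bundle on $E\cong X$ and thus have derived pushforwards that are skyscraper complexes built from $\{\mathbb R\pi_\ast\mathcal O_{\widetilde C},\mathbb R\pi_\ast\mathcal O_{\widetilde C}(E)\}$ with one cone, one descends from $\mathbb R\pi_\ast\mathcal O_{\widetilde C}$ to a twist whose derived pushforward is cohomologically concentrated in degree zero through $d-n$ steps — each step costing one cone to assemble the skyscraper and one cone to splice it — and recovers $\mathcal O_C$ with a final cone, giving $\operatorname{level}^{\mathbb R\pi_\ast(\mathcal O_{\widetilde C}\oplus\mathcal O_{\widetilde C}(E))}(\mathcal O_C)\le 1+2(d-n)$. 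The Rouquier bound again follows from the transfer with $\lambda=1+2(d-n)$ and $\dim D^b_{\operatorname{coh}}(\widetilde C)\le 2\dim C$.

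The hard part is the level computation for $\mathcal O_X$ (respectively $\mathcal O_C$), not the transfer. A naive composition-series filtration of $\bar R/R$ (respectively of $R^{n-1}\pi_\ast\mathcal O_{\widetilde C}$) spends one cone per simple subquotient in addition to the cone needed to produce that subquotient, overshooting the stated bound by roughly a factor of two; beating it means extracting extra mileage from structure — the conductor being simultaneously free over the normalization and, over the base, a carrier of the finite-length pieces as retracts; and, for the cone, the explicit ($\mathbb G_m$-equivariant) description of the higher direct images, which do not split off in $D^b_{\operatorname{coh}}(C)$ precisely when $C$ lacks rational singularities (cf. \Cref{lem:splitting_via_thick_tricat}) but whose number of graded layers controls the cost of the repair. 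A minor caveat in the transfer is that $\mathbb L\pi^\ast$ need not preserve $D^b_{\operatorname{coh}}$ when $X$ is singular; one sidesteps this by applying the projection formula only to perfect complexes and handling the finitely many skyscrapers over $\operatorname{Sing}(X)$ directly, which is legitimate since $\operatorname{Sing}(X)$ is zero-dimensional in both settings.
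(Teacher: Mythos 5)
Your reduction to the two level computations, followed by the transfer to Rouquier dimension via \cite[Proposition 3.16]{Lank/Olander:2024}, matches the paper. The level computations themselves, however, diverge from the paper's arguments, and there is a genuine gap in part~(1).

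For part~(1), your key intermediate claim $\operatorname{level}^{\bar R}(\bar R/R)\le\delta_s$ is false. Take any curve singularity with $\delta_s=1$ (e.g.\ a node): then $\bar R/R\cong\kappa(s)$ is a torsion $R$-module, while $\bar R$ is torsion-free, so $\kappa(s)$ cannot be a retract of any finite sum of shifts of $\bar R$; thus $\operatorname{level}^{\bar R}(\bar R/R)\ge 2>\delta_s$. Your conductor/partial-normalization sketch does not repair this: the simple subquotients of the chain $R=R_0\subsetneq\cdots\subsetneq R_{\delta_s}=\bar R$ are still torsion and hence not level-one objects relative to $\bar R$. What the paper does instead (Proposition~\ref{prop:rouquier_dimension_curves}) is work with an \emph{augmented} generator $G$ on $X^\nu$: it adjoins the structure sheaves $q_\ast\mathcal{O}_{Z_s}$ of the scheme-theoretic fibers over the finitely many singular points. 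Finiteness of $p\colon Z_s\to\operatorname{Spec}\kappa(s)$ makes $\kappa(s)\to p_\ast\mathcal{O}_{Z_s}$ split, so $j_\ast\kappa(s)$ lands in $\langle\nu_\ast G\rangle_1$; then multiplicativity of level together with the length-$\delta_s$ filtration of $Q_s$ gives $\operatorname{level}^{\nu_\ast G}(Q_s)\le 1\cdot\delta_s$, and one more cone yields $1+\max_s\delta_s$. The ``factor of two'' you were worried about is killed precisely by this multiplicativity-plus-retract mechanism, not by the conductor; the conductor plays no role in the paper's proof.

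For part~(2), your outline is the same in spirit as the paper's (Grauert--Riemenschneider vanishing, climbing twists $\mathcal O_{\widetilde C}(lE)$, Serre duality on the hypersurface to get non-vanishing), but the paper's packaging is cleaner and closes the estimate more directly. Lemma~\ref{lem:cone_annihilator_bound} shows $\mathfrak m^{d-n}$ annihilates $\mathbb R^c\pi_\ast\mathcal O_{\widetilde C}$ for $c>0$, so the cone $Q$ of $\mathcal O_C\to\mathbb R\pi_\ast\mathcal O_{\widetilde C}$ is scheme-theoretically supported on the $(d-n)$-th thickening of the vertex and is therefore built from the single skyscraper $j_\ast\kappa(v)$ in $d-n$ cones; then $j_\ast\kappa(v)\in\langle\mathbb R\pi_\ast(\mathcal O_{\widetilde C}\oplus\mathcal O_{\widetilde C}(E))\rangle_2$ via the sequence $0\to\mathcal O_{\widetilde C}\to\mathcal O_{\widetilde C}(E)\to i_\ast\mathcal O_E(E)\to 0$ together with the non-vanishing of $\mathbb Rp_\ast\mathcal O_E(E)$, and multiplicativity gives $2(d-n)$, plus one final cone. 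Your step-by-step ``one cone to assemble, one to splice'' count lands on the same $1+2(d-n)$, but needs each of the intermediate twisted skyscrapers $\mathbb R\pi_\ast i_\ast\mathcal O_E((l+1)E)$ to be available cheaply, which you leave implicit; the paper sidesteps this by reducing everything to the single object $j_\ast\kappa(v)$.
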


\subsection{Notation}
\label{sec:intro_notation}

Let $X$ be a Noetherian scheme. The following will be of importance:
\begin{enumerate}
    \item $D(X):=D(\operatorname{Mod}(X))$ is the derived category of $\mathcal{O}_X$-modules.
    \item $D_{\operatorname{Qcoh}}(X)$ is the (strictly full) subcategory of $D(X)$ consisting of complexes with quasi-coherent cohomology.
    \item $D_{\operatorname{coh}}^b(X)$ is the (strictly full) subcategory of $D(X)$ consisting of complexes having bounded and coherent cohomology.
    \item $\operatorname{perf}(X)$ is the (strictly full) subcategory of $D_{\operatorname{Qcoh}}(X)$ consisting of the perfect complexes on $X$.
\end{enumerate}
If $X$ is affine, then we might at times abuse notation and write $D(R):=D_{\operatorname{Qcoh}}(X)$ where $R:=H^0(X,\mathcal{O}_X)$ are the global sections; similar conventions will occur for the other categories.

\begin{ack}
    We thank Srikanth B. Iyengar, Peter McDonald, Mircea Musta\c{t}\u{a}, Amnon Neeman, and Karl Schwede for comments and discussions on earlier versions of this work. The first author was partially supported by the National Science Foundation under Grant No. DMS-1928930 while the author was in residence at the Simons Laufer Mathematical Sciences Institute (formerly MSRI). The second author was partially supported by the National Science Foundation under Grant No. DMS-2101874. The authors thank Bhargav Bhatt for private communications regarding improvements to earlier work, particularly for Lemma~\ref{lem:splitting_via_thick_tricat} and Lemma~\ref{lem:bhatt_lemma}. Both authors would also like to thank the anonymous referee.
\end{ack}

\section{Generation}
\label{sec:generation}

This section provides a streamlined overview of generation in triangulated categories, and utilizes content found in \cite{BVdB:2003, Rouquier:2008, Neeman:2021,ABIM:2010}. Let $\mathcal{T}$ be a triangulated category with the shift functor $[1]\colon \mathcal{T} \to \mathcal{T}$.

\begin{definition}(\cite{BVdB:2003})\label{def:thick_subcategories}
    Suppose $\mathcal{S}$ is a subcategory of $\mathcal{T}$.
    \begin{enumerate} 
        \item A triangulated subcategory is said to be \textbf{thick} if it is closed under direct summands.
        \item The smallest thick subcategory containing $\mathcal{S}$ in $\mathcal{T}$ is denoted by $\langle \mathcal{S} \rangle$. If $\mathcal{C}$ consists of a single object $G$, then we write $\langle \mathcal{C}\rangle$ as $\langle G\rangle$.
        \item Consider the following additive subcategories of $\mathcal{T}$:
        \begin{enumerate}
            \item $\operatorname{add}(\mathcal{S})$ is the smallest strictly full subcategory of $\mathcal{T}$ containing $\mathcal{S}$ and is closed under shifts, finite coproducts, and direct summands
            \item $\langle \mathcal{S} \rangle_0 : = \operatorname{add}(0)$
            \item $\langle \mathcal{S} \rangle_1 := \operatorname{add}(\mathcal{S})$
            \item $\langle \mathcal{S} \rangle_n := \operatorname{add} \{ \operatorname{cone}(\phi) : \phi \in \operatorname{Hom}_{\mathcal{T}} (\langle \mathcal{S} \rangle_{n-1}, \langle \mathcal{S} \rangle_1) \}$.
        \end{enumerate}
        If $\mathcal{C}$ consists of a single object $G$, then we write $\langle \mathcal{C}\rangle_n$ as $\langle G\rangle_n$.
    \end{enumerate}
\end{definition}

\begin{remark}
    \hfill
    \begin{enumerate}
        \item There are different forms of notation for the ideas in Definition~\ref{def:thick_subcategories}, which we highlight. On one hand, at times $\langle - \rangle_n$ is denoted by $\operatorname{thick}_{\mathcal{T}}^n (-)$, e.g. in \cite{ABIM:2010}. On the other hand, $\operatorname{add}(-)$ is sometimes taken to be the smallest strictly full subcategory of $\mathcal{T}$ containing $\mathcal{S}$ and is closed under finite coproducts, see \cite[Reminder 1.1]{Neeman:2021} and its proceeding remark.
        \item If $\mathcal{S}$ is a subcategory of $\mathcal{T}$, then there exists an exhaustive filtration on the smallest thick subcategory in $\mathcal{T}$ containing $\mathcal{S}$:
        \begin{displaymath}
            \langle \mathcal{S} \rangle_0 \subseteq \langle \mathcal{S} \rangle_1 \subseteq \cdots \subseteq \bigcup^{\infty}_{n=0} \langle \mathcal{S} \rangle_n = \langle \mathcal{S} \rangle.
        \end{displaymath}
    \end{enumerate}
\end{remark}

\begin{definition}\label{def:strong_generators}(\cite{ABIM:2010}, \cite{Rouquier:2008})
    Let $E,G$ be objects and $\mathcal{C}$ a subcategory of $\mathcal{T}$.
    \begin{enumerate}
        \item An object $E$ is \textbf{finitely built} by $\mathcal{C}$ if $E$ belongs to $\langle \mathcal{C} \rangle$. The \textbf{level}, denoted $\operatorname{level}_\mathcal{T}^{\mathcal{C}} (E)$, of $E$ with respect to $\mathcal{C}$ is the minimal non-negative integer $n$ such that $E$ is in $\langle \mathcal{C} \rangle_n$. If $\mathcal{C}$ consists of a single object $G$, then we write $\operatorname{level}^{\mathcal{C}} (E)$ as $\operatorname{level}^G (E)$.
        \item A \textbf{classical generator} for $\mathcal{T}$ is an object $G$ in $\mathcal{T}$ satisfying $\langle G \rangle =
        \mathcal{T}$.
        \item A \textbf{strong generator} for $\mathcal{T}$ is an object $G$ satisfying $\langle G \rangle_n =
        \mathcal{T}$ for some $n\geq 0$. The \textbf{generation time} of $G$ is the minimal $n$ such that for all $E$ in $ \mathcal{T}$ one has $\operatorname{level}_\mathcal{T}^G (E)\leq n+1$. 
        \item The \textbf{Rouquier dimension} of $\mathcal{T}$, denoted $\dim \mathcal{T}$, is the smallest integer $d$ such that $\langle G \rangle_{d+1} = \mathcal{T}$ for some object $G$ in $ \mathcal{T}$.
    \end{enumerate}
\end{definition}

\begin{example}\label{ex:strong_generator_examples}
    \hfill
    \begin{enumerate}
        \item If $X$ is a quasi-affine Noetherian regular scheme of (finite) Krull dimension $n$, then $\langle \mathcal{O}_X \rangle_{n+1} = D^b_{\operatorname{coh}}(X)$, see \cite[Corollary 5]{Olander:2023}.
        \item If $X$ is a smooth quasi-projective variety $X$ over a field with a very ample line bundle $\mathcal{L}$, then $\langle \bigoplus^{\dim X}_{i=0} \mathcal{L}^{\otimes i} \rangle_{2 \dim X  +1 } = D^b_{\operatorname{coh}}(X)$, see \cite[Proposition 7.9]{Rouquier:2008}.
        \item If $X$ is a quasi-projective variety $X$ over a perfect field of nonzero characteristic with a very ample line bundle $\mathcal{L}$, then $F_\ast^e (\bigoplus^{\dim X}_{i=0} \mathcal{L}^{\otimes i})$ is a strong generator for $D^b_{\operatorname{coh}}(X)$ where $e \gg 0$ and $F \colon X \to X$ is the Frobenius morphism, see \cite[Corollary 3.9]{BILMP:2023}.
        \item If $X$ is a separated quasi-excellent Noetherian scheme of finite Krull dimension, then $D^b_{\operatorname{coh}}(X)$ admits a strong generator, see \cite[Main Theorem]{Aoki:2021}. For related work in the affine setting, see \cite{Iyengar/Takahashi:2019, Dey/Lank/Takahashi:2023}.
        \item If $X$ is a Noetherian scheme such that every closed integral subscheme of $X$ has an open regular locus, then $D^b_{\operatorname{coh}}(X)$ admits a classical generator, see \cite[Theorem 1.1]{Dey/Lank:2024a}. See \cite{Clark/Lank/Parker/ManaliRahul:2024,Neeman:2022} for further connections to regular locus to other notions besides `generation'.
        \item There has been recent attention towards studying generation in \textit{noncommutative} settings. See \cite{DeDeyn/Lank/ManaliRahul:2024a,DeDeyn/Lank/ManaliRahul:2024b} for generalizations of the work above to the case of coherent alegbras over a Noetherian scheme. This largely enhances the previously known setting of \cite{Elagin/Lunts/Schnurer:2020, Bhaduri/Dey/Lank:2023}.
    \end{enumerate}
\end{example}

\begin{definition}(\cite{Atiyah:1956, Walker/Warfield:1976})
    Let $\mathcal{C}$ be an additive category. 
    \begin{enumerate}
        \item $\mathcal{C}$ is said to be a \textbf{Krull-Schmidt} category if every object in $\mathcal{C}$ decomposes into a finite coproduct of objects having local endomorphism rings.
        \item An object of $\mathcal{C}$ is called \textbf{indecomposable} if it is not isomorphic to a direct sum of two nonzero objects. 
    \end{enumerate}
\end{definition}

\begin{remark}\label{rmk:krull_schmidt_categories}
    \hfill
    \begin{enumerate}
        \item If $\mathcal{C}$ is a Krull-Schmidt category, then every object decomposes into a finite coproduct of indecomposables, and this is unique up to permutations, see \cite[Theorem 4.2]{Krause:2015}.
        \item Let $R$ be a complete Noetherian local ring. Consider an $R$-linear abelian category $\mathcal{A}$.
        \begin{enumerate}
            \item $\mathcal{A}$ is said to be \textit{$\operatorname{Ext}$-finite} if the $R$-module $\operatorname{Ext}^n (A,B)$ is finitely generated for all objects $A,B$ in $\mathcal{A}$ and $n\geq 0$.
            \item $\mathcal{A}$ is $\operatorname{Ext}$-finite over $R$ if, and only if, $D^b (\mathcal{A})$ is $\operatorname{Hom}$-finite over $R$.
            \item If $\mathcal{A}$ is $\operatorname{Ext}$-finite (over $R$), then $D^b (\mathcal{A})$ is a Krull-Schmidt category, see \cite[Corollary B]{Le/Chen:2007}.
        \end{enumerate}
    \end{enumerate}
\end{remark}

The following was originally observed for the category of coherent sheaves on a proper variety over an algebraically closed field \cite[Theorem 2]{Atiyah:1956}.

\begin{lemma}(Atiyah)\label{lem:projective_over_complete_local_ring_krs_category}
    If $X$ is a proper scheme over a Noetherian complete local ring $R$, then both $\operatorname{coh}X$ and $D^b_{\operatorname{coh}}(X)$ are Krull-Schmidt categories. Additionally, if $X$ is integral, then $\mathcal{O}_X$ is an indecomposable object in both categories.
\end{lemma}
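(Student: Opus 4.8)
The plan is to reduce everything to the classical Krull–Schmidt property of finitely generated modules over a complete Noetherian local ring, via the standard coherence/finiteness consequences of properness. The key input is that for $X$ proper over a Noetherian complete local ring $(R,\mathfrak{m})$, the cohomology groups $H^i(X,\mathcal{F})$ and more generally $\operatorname{Ext}^i_X(\mathcal{F},\mathcal{G})$ are finitely generated $R$-modules for all coherent $\mathcal{F},\mathcal{G}$ (this is the theorem on coherence of higher direct images applied to the structure morphism $X \to \operatorname{Spec} R$, together with the local-to-global Ext spectral sequence). Consequently $\operatorname{coh} X$ is an $\operatorname{Ext}$-finite $R$-linear abelian category in the sense of Remark~\ref{rmk:krull_schmidt_categories}.

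**First I would** record that $\operatorname{coh} X$ is Krull–Schmidt: since $R$ is complete local, $\operatorname{Hom}_X(\mathcal{F},\mathcal{F})$ is a finitely generated $R$-algebra that is module-finite over $R$, hence a semiperfect ring, so idempotents lift and every coherent sheaf decomposes into a finite direct sum of objects with local endomorphism rings; alternatively this is exactly the content of the cited \cite[Corollary B]{Le/Chen:2007} once $\operatorname{Ext}$-finiteness is in hand. **Then** the statement for $D^b_{\operatorname{coh}}(X)$ follows from Remark~\ref{rmk:krull_schmidt_categories}(2): $\operatorname{Ext}$-finiteness of $\operatorname{coh} X$ over the complete local ring $R$ gives that $D^b(\operatorname{coh} X)$ is $\operatorname{Hom}$-finite over $R$, and a $\operatorname{Hom}$-finite $R$-linear triangulated category with $R$ complete local is Krull–Schmidt (again \cite[Corollary B]{Le/Chen:2007}, or by the same semiperfect-endomorphism-ring argument applied to objects of $D^b_{\operatorname{coh}}(X)$). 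One should note $D^b_{\operatorname{coh}}(X) \simeq D^b(\operatorname{coh} X)$ since $X$ is Noetherian, so the cited results apply verbatim.

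**For the last assertion**, suppose $X$ is in addition integral. If $\mathcal{O}_X \cong \mathcal{F} \oplus \mathcal{G}$ in $\operatorname{coh} X$ with both summands nonzero, then applying global sections (or passing to the generic point) the idempotent $e \in \operatorname{Hom}_X(\mathcal{O}_X,\mathcal{O}_X) = H^0(X,\mathcal{O}_X)$ cutting out $\mathcal{F}$ is a nontrivial idempotent in the ring $H^0(X,\mathcal{O}_X)$. But $X$ is integral and proper over the local ring $R$, so $H^0(X,\mathcal{O}_X)$ is a domain (it is a subring of the function field $k(X)$, being the sections of the reduced irreducible scheme $X$), hence has no nontrivial idempotents; thus $\mathcal{O}_X$ is indecomposable in $\operatorname{coh} X$. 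For indecomposability in $D^b_{\operatorname{coh}}(X)$, the endomorphism ring is $\operatorname{Hom}_{D^b_{\operatorname{coh}}(X)}(\mathcal{O}_X,\mathcal{O}_X) = H^0(X,\mathcal{O}_X)$, still a domain and hence with no nontrivial idempotents, and in a Krull–Schmidt category an object with a connected (idempotent-free) endomorphism ring is indecomposable.

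**The main obstacle**, such as it is, is purely bookkeeping: verifying that the finiteness hypotheses of the cited categorical Krull–Schmidt criterion are genuinely met — i.e. carefully invoking the coherence theorem for proper morphisms over a Noetherian (here complete local) base to get $\operatorname{Ext}$-finiteness of $\operatorname{coh} X$ over $R$ — and confirming that $H^0(X,\mathcal{O}_X)$ is a domain when $X$ is integral. There is no deep new argument; the work is in citing \cite{Atiyah:1956, Le/Chen:2007} and Remark~\ref{rmk:krull_schmidt_categories} correctly and handling the (mild) generalization from Atiyah's algebraically-closed-field setting to an arbitrary Noetherian complete local base.
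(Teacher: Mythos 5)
Your proof is correct and follows essentially the same strategy as the paper: reduce the Krull--Schmidt claim to $\operatorname{Ext}$-finiteness of $\operatorname{coh}X$ over $R$ (via coherence of pushforwards for proper morphisms over a Noetherian base), then invoke the result of Le and Chen that $\operatorname{Ext}$-finiteness over a complete local ring forces $D^b(\operatorname{coh}X)$ to be Krull--Schmidt. Both proofs then compute $\operatorname{End}(\mathcal{O}_X) = H^0(X,\mathcal{O}_X)$ and use integrality of $X$ to conclude.

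The one genuine (if minor) divergence is in the last step. The paper observes that for $X$ proper over a complete Noetherian local ring, $H^0(X,\mathcal{O}_X)$ is a finite product of complete local rings; integrality forces there to be exactly one factor and that factor to be a domain, so $\operatorname{End}(\mathcal{O}_X)$ is a \emph{local} ring, which by the standard characterization of indecomposables in a Krull--Schmidt category gives indecomposability. You instead argue directly that $H^0(X,\mathcal{O}_X)$ is a domain (a subring of the function field), hence has no nontrivial idempotents, and in any additive category with split idempotents an object whose endomorphism ring lacks nontrivial idempotents is indecomposable. Your route is slightly more economical: it sidesteps the structural fact about $H^0$ being a product of complete local rings and requires only the weaker idempotent-freeness of the endomorphism ring rather than locality. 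Both yield the same conclusion, and the paper's version has the mild advantage of exhibiting $\operatorname{End}(\mathcal{O}_X)$ as local, which is the intrinsic Krull--Schmidt notion of indecomposability; but for the purposes of the lemma your version is entirely adequate.

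One small bookkeeping remark: your ``alternatively'' clause cites Le--Chen's Corollary B as directly giving the Krull--Schmidt property for the abelian category $\operatorname{coh}X$, but that result is stated for the bounded derived category. This is harmless — your primary semiperfect-endomorphism-ring argument handles $\operatorname{coh}X$ correctly, and in any case $\operatorname{coh}X$ is a full subcategory of $D^b_{\operatorname{coh}}(X)$ closed under direct summands, so it inherits the Krull--Schmidt property — but it is worth phrasing the citation carefully.
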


\begin{proof}
    If $X$ is proper over $R$, then $\operatorname{coh}X$ is $\operatorname{Ext}$-finite (over $R$). This can be deduced from \cite[\href{https://stacks.math.columbia.edu/tag/0D0T}{Tag 0D0T}]{StacksProject}. Hence, we see that $D^b_{\operatorname{coh}}(X)$ is a Krull-Schmidt category, see Remark~\ref{rmk:krull_schmidt_categories}. As $X$ is proper over a Noetherian complete local ring, $H^0(X,\mathcal{O}_X)$ is a finite product of complete local rings. But $X$ being integral ensures that $H^0 (X, \mathcal{O}_X)$ is integral domain, so $H^0(X,\mathcal{O}_X)$ must be a Noetherian complete local integral domain. This tells us that $\operatorname{Ext}^0 (\mathcal{O}_X, \mathcal{O}_X)$ is a local ring. Hence, $\mathcal{O}_X$ is indecomposable via Lemma 5.2 and Proposition 5.4 of \cite{Krause:2015}.
\end{proof}

\section{Characterizations}
\label{sec:characterizations}

The primary objective of this section is to provide a triangulated
characterization of splinters, derived splinters, rational singularities, and Du
Bois singularities. After doing so, we study, for a given proper surjective morphism $\pi \colon Y \to X$, what information can be extracted by number of cones needed for $\mathcal{O}_X$ and $\mathbb{R}\pi_\ast \mathcal{O}_Y$ to build one another in $D^b_{\operatorname{coh}}(X)$. 

\subsection{Singularities}
\label{sec:singularities}

Before diving ahead into the proofs, we detour through a quick reminder on the various notions of singularities relevant to our work.

\begin{definition}(\cite{Sing:1999, Ma:1988})
    A Noetherian scheme $X$ is said to be a \textbf{splinter} if the natural map $\mathcal{O}_X \to \pi_\ast \mathcal{O}_Y$ splits in $\operatorname{coh} X$ for all finite surjective morphisms $\pi \colon Y \to X$.
\end{definition}

\begin{example}
    A connected Noetherian $\mathbb{Q}$-scheme is a splinter if, and only if, it is normal. This is \cite[Example 2.1]{Bhatt:2012}. For instance, any normal quasi-projective variety over $\mathbb{C}$ is a splinter.
\end{example}

\begin{definition}(\cite{Bhatt:2012})
    A Noetherian scheme $X$ is said to be a \textbf{derived splinter} if the natural map $\mathcal{O}_X \to \mathbb{R}\pi_\ast \mathcal{O}_Y$ splits in $D^b_{\operatorname{coh}}(X)$ for all proper surjective morphisms $\pi \colon Y \to X$.
\end{definition}

\begin{example}
    \hfill
    \begin{enumerate}
        \item A Noetherian $\mathbb{F}_p$-scheme is a derived splinter if, and only if, it is a splinter. This is \cite[Theorem 1.4]{Bhatt:2012}.
        \item Any derived splinter must be a splinter.
    \end{enumerate}
\end{example}

\begin{definition}(\cite{Kempf/Knudsen/Mumford/Saint-Donat:1973, Kollar:2013})
    \begin{enumerate}
        \item Let $(R,\mathfrak{m})$ be a quasi-excellent Noetherian local ring of equal characteristic zero. We say $R$ has \textbf{rational singularities} if $R$ is normal and if for every proper birational morphism $\pi \colon X \to \operatorname{Spec}(R)$ with $X$ regular, we have $\mathbb{R}^j \pi_\ast \mathcal{O}_X = 0$ for $j>0$.
        \item Let $Y$ be a quasi-excellent Noetherian normal scheme of equal characteristic zero. We say $Y$ has \textbf{rational singularities} if every local ring $\mathcal{O}_{Y,p}$ has rational singularities for all $p$ in $Y$.
    \end{enumerate}
\end{definition}

\begin{remark}\label{rmk:rational_singularities_char_zero}
    Given a quasi-excellent Noetherian scheme of equal characteristic zero, the following are equivalent:
    \begin{enumerate}
        \item $X$ having rational singularities
        \item $X$ being a derived splinter.
    \end{enumerate}
    This is \cite[Theorem 2.12]{Bhatt:2012}, \cite[Theorem 3]{Kovac:2000}, and \cite[Theorem 9.5]{Murayama:2024}.
\end{remark}

\begin{definition}
    A locally quasi-excellent Noetherian scheme $X$ of equal characteristic zero is said to have \textbf{semi-rational singularities} if for every resolution of singularities $\pi \colon \widetilde{X} \to X$, one has the natural map $\pi_\ast \mathcal{O}_{\widetilde{X}} \to \mathbb{R}\pi_\ast \mathcal{O}_{\widetilde{X}}$ is an isomorphism in $D^b_{\operatorname{coh}}(X)$. 
\end{definition}

\begin{remark}\label{rmk:du_bois_complex}
    Let $X$ be a variety over a field of characteristic zero. We can associate to $X$ a filtered complex, denoted $(\underline{\Omega}^\bullet_X, F)$, in $D(X)$ satisfying some useful properties, see \cite{Steenbrink:1981,DuBois:1981}. This filtered complex is often called the \textit{Du Bois complex}, and can be viewed as an analog of the de Rham complex for singular varieties. The graded pieces of this filtered complex, suitably shifted, are denoted $\underline{\Omega}^i_X$ and are objects in $D^b_{\operatorname{coh}}(X)$. See \cite[Theorem 3.1]{Schwede:2007} for further background.
\end{remark}

\begin{definition}
    A variety $X$ over a field of characteristic zero is said to have \textbf{Du Bois singularities} if the natural map $\mathcal{O}_X \to \underline{\Omega}^0_X$ is an isomorphism in $D^b_{\operatorname{coh}}(X)$.
\end{definition}

\begin{remark}\label{rmk:du_bois_properties}
    \hfill
    \begin{enumerate}
        \item Any variety over a field of characteristic zero with rational singularities must have Du Bois singularities, see \cite[Theorem S]{Kovacs:1999}. There are examples of such varieties with Du Bois singularities, but not rational singularities, see \cite{Chou/Song:2018}.
        \item Suppose $X$ is a variety over a field of characteristic zero such that there exists a closed immersion $i \colon X \to Y$ where $Y$ is a smooth variety (which is called an \textit{embeddable} variety). Let $\pi \colon \widetilde{Y} \to Y$ be a log resolution of the pair $(Y,X)$ that is an isomorphism away from $X$. Denote by $E$ the inverse image of $X$ along $\pi$ with the reduced induced closed subscheme structure in $\widetilde{Y}$. The following are noted from \cite[Theorem 4.3]{Schwede:2007} and \cite[Theorem 2.3]{Kovacs:1999}:
        \begin{enumerate}
            \item If $p \colon E \to X$ is the natural morphism, then there exists an isomorphism $\underline{\Omega}^0_X \to \mathbb{R}p_\ast \mathcal{O}_E$ in $D^b_{\operatorname{coh}}(X)$.
            \item $X$ has Du Bois singularities if, and only if, the natural map $\mathcal{O}_X \to \mathbb{R}p_\ast \mathcal{O}_E$ splits in $D^b_{\operatorname{coh}}(X)$.
        \end{enumerate}
    \end{enumerate}
\end{remark}

\subsection{Proofs}
\label{sec:proofs}

To start proving our results, we need to understand, for a given proper morphism of schemes $\pi \colon Y\to X$, how the natural map $\mathcal{O}_X \to \mathbb{R}\pi_\ast \mathcal{O}_Y$ splitting is related to the level of $\mathcal{O}_X$ with respect to the essential image of $\mathbb{R}\pi_\ast \colon D^b_{\operatorname{coh}}(Y) \to D^b_{\operatorname{coh}}(X)$. We state the following lemma which is an improvement to 
an earlier version of our work, based on input by Bhargav Bhatt.

\begin{lemma}\label{lem:splitting_via_thick_tricat}
    Consider a map $f\colon A \to B$ of algebras in a symmetric monoidal triangulated category $\mathcal{D}$. Suppose $M$ is a $B$-module in $\mathcal{D}$ such that $A$ is a direct summand of $f_\ast M$, with $M$ regarded as an $A$-module by restriction along $f$. Then the algebra map $A \to B$ splits in $\mathcal{D}$.
\end{lemma}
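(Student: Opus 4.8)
The plan is to write down an explicit retraction $B \to A$ in $\mathcal{D}$ by a transfer-type construction, using the distinguished ``element'' of $M$ supplied by the splitting $A \to M$. Write $\mathbf{1}$ for the unit of $\mathcal{D}$, $\eta_A \colon \mathbf{1} \to A$ for the unit of the algebra $A$, $f \colon A \to B$ for the given algebra map, and $a_M \colon B \otimes M \to M$ for the $B$-action. Unravelling the hypothesis that $A$ is a direct summand of $M$ \emph{in the category of $A$-modules in $\mathcal{D}$} — i.e. that the inclusion is a split monomorphism of $A$-modules — produces $A$-linear maps $\iota \colon A \to M$ and $\rho \colon M \to A$ with $\rho \circ \iota = \mathrm{id}_A$, where the relevant $A$-module structure $a'_M \colon A \otimes M \to M$ on $M$ is the restriction of $a_M$ along $f$, namely $a'_M = a_M \circ (f \otimes \mathrm{id}_M)$.

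Next I would set $m_0 \colon \mathbf{1} \xrightarrow{\eta_A} A \xrightarrow{\iota} M$ and define $s \colon B \to A$ as the composite
\[
s \colon B \xrightarrow{\ \sim\ } B \otimes \mathbf{1} \xrightarrow{\mathrm{id}_B \otimes m_0} B \otimes M \xrightarrow{a_M} M \xrightarrow{\rho} A,
\]
the first arrow being the right unitor. The claim is that $s \circ f = \mathrm{id}_A$, which is exactly the assertion that $f$ splits in $\mathcal{D}$. To check this, I would first record the auxiliary identity that the composite $A \xrightarrow{\ \sim\ } A \otimes \mathbf{1} \xrightarrow{\mathrm{id}_A \otimes m_0} A \otimes M \xrightarrow{a'_M} M$ equals $\iota$: this follows by chasing the square expressing $A$-linearity of $\iota$ (multiplication $A \otimes A \to A$ on one side, $a'_M$ on the other) after precomposition with $\mathrm{id}_A \otimes \eta_A$, together with the right-unit axiom for the algebra $A$. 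Then, substituting $a'_M = a_M \circ (f \otimes \mathrm{id}_M)$ and using naturality of the unitor and the interchange of $\otimes$ with composition, one rewrites $s \circ f$ as $\rho$ composed with that very composite $A \to M$, so $s \circ f = \rho \circ \iota = \mathrm{id}_A$.

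I expect the only real work to be the bookkeeping with coherence isomorphisms (unitors, and commuting $\otimes$ past composition); there is no homotopical or triangulated input at all, so none of the usual octahedral-axiom difficulties arise. In particular the argument uses neither exactness of $\otimes$, nor commutativity of $A$ or $B$, nor the existence of a relative tensor product $- \otimes_A -$, and the symmetry of $\mathcal{D}$ is not needed — it is a purely formal statement about monoid objects and their modules in a monoidal category. The one point worth stating carefully is the meaning of ``direct summand as an $A$-module'', which is precisely what licenses the $A$-linear maps $\iota$ and $\rho$ used above.
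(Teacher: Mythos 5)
Your proof is correct, and it is essentially the same idea as the paper's, but executed slightly differently. The paper's proof forms the relative tensor product $B \otimes_A M$, observes that the natural map $M \cong A \otimes_A M \to B \otimes_A M$ is split by the $B$-action $a_M \colon B \otimes_A M \to M$, and then restricts along the summand $A \hookrightarrow M$ to split $A \to B$. The retraction it produces is, once unwound, $B \cong B \otimes_A A \xrightarrow{\mathrm{id}_B \otimes \iota} B \otimes_A M \xrightarrow{a_M} M \xrightarrow{\rho} A$, which agrees with your $s$ after passing from $\otimes$ to $\otimes_A$. What you do differently is avoid $\otimes_A$ altogether and build $s$ directly from the absolute monoidal product, the unit map $\eta_A$, and the element $m_0 = \iota \circ \eta_A$. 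This is a small but real gain in rigor at the stated level of generality: a bare symmetric monoidal triangulated category need not admit well-behaved relative tensor products $B \otimes_A M$ (one usually needs an enhancement to even define them), whereas your argument uses only the monoidal structure and is valid in an arbitrary monoidal category — and, as you note, it needs neither the symmetry nor the triangulation. The paper's version is more succinct and is entirely adequate for the application to $\mathcal{D} = D_{\mathrm{Qcoh}}(X)$, where the relative tensor product is available; your version is the one that actually proves the lemma as literally stated. Your auxiliary identity (that $A \cong A \otimes \mathbf{1} \xrightarrow{\mathrm{id}_A \otimes m_0} A \otimes M \xrightarrow{a'_M} M$ equals $\iota$) is exactly the right computation and the coherence bookkeeping you flag is the only remaining content.
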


We will apply Lemma~\ref{lem:splitting_via_thick_tricat} to the special case where $\mathcal{D} =D_{\operatorname{Qcoh}}(X)$ and $A \to B$ is the natural map $\mathcal{O}_X \to \mathbb{R}\pi_\ast \mathcal{O}_Y$ for a proper morphism $\pi \colon Y \to X$ of Noetherian schemes. If we are given a pair of objects $E,G$ in $D^b_{\operatorname{coh}}(X)$ and $E$ is a direct summand of $G$ in $D_{\operatorname{Qcoh}}(X)$, then $E$ is also a direct summand of $G$ in $D^b_{\operatorname{coh}}(X)$ since $D^b_{\operatorname{coh}}(X)$ is a full subcategory.

\begin{proof}[Proof of Lemma~\ref{lem:splitting_via_thick_tricat}]
    It suffices to check the splitting after tensoring over $A$ with $f_\ast M$ (as $f_\ast M$ contains $A$ as a direct summand). But then the statement is clear: since $M$ is a $B$-module, the action map $B \otimes_A f_\ast M \to M$ provides the desired splitting of $M \to B \otimes_A f_\ast M$.
\end{proof}

\begin{theorem}\label{thm:derived_splinters_via_generation}
    If $X$ is a Noetherian scheme, then the following are equivalent:
    \begin{enumerate}
        \item $X$ is a derived splinter
        \item $\operatorname{level}^{\mathbb{R}\pi_\ast D^b_{\operatorname{coh}}(Y)} (\mathcal{O}_X) \leq 1$ for all proper surjective morphisms $\pi \colon Y \to X$.
    \end{enumerate}
    There is an analogous statement for splinters.
\end{theorem}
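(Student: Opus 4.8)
The plan is to prove the equivalence by unwinding what each condition says in terms of retracts, and then applying Lemma~\ref{lem:splitting_via_thick_tricat} in the guise described in the paragraph following it. First I would observe that, for a proper surjective morphism $\pi\colon Y\to X$, the object $\mathbb{R}\pi_\ast\mathcal{O}_Y$ is itself an object of $\mathbb{R}\pi_\ast D^b_{\operatorname{coh}}(Y)$, and more generally every object of $\mathbb{R}\pi_\ast D^b_{\operatorname{coh}}(Y)$ is of the form $\mathbb{R}\pi_\ast M$ for some $M\in D^b_{\operatorname{coh}}(Y)$. Since $\langle \mathbb{R}\pi_\ast D^b_{\operatorname{coh}}(Y)\rangle_1 = \operatorname{add}(\mathbb{R}\pi_\ast D^b_{\operatorname{coh}}(Y))$ consists precisely of retracts of finite coproducts of shifts of such objects, the condition $\operatorname{level}^{\mathbb{R}\pi_\ast D^b_{\operatorname{coh}}(Y)}(\mathcal{O}_X)\leq 1$ says exactly that $\mathcal{O}_X$ is a retract, in $D^b_{\operatorname{coh}}(X)$, of some object $\bigoplus_{i} (\mathbb{R}\pi_\ast M_i)[n_i]$; using that $\mathbb{R}\pi_\ast$ commutes with finite coproducts and shifts, this is a retract of $\mathbb{R}\pi_\ast\big(\bigoplus_i M_i[n_i]\big)$, so without loss of generality $\mathcal{O}_X$ is a retract of $\mathbb{R}\pi_\ast N$ for a single $N\in D^b_{\operatorname{coh}}(Y)$.

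Next I would handle the two implications. For $(1)\Rightarrow(2)$: if $X$ is a derived splinter then the natural map $\mathcal{O}_X\to\mathbb{R}\pi_\ast\mathcal{O}_Y$ splits, so $\mathcal{O}_X$ is a retract of $\mathbb{R}\pi_\ast\mathcal{O}_Y\in\langle\mathbb{R}\pi_\ast D^b_{\operatorname{coh}}(Y)\rangle_1$, giving $\operatorname{level}\leq 1$ immediately (this direction is essentially formal). The substantive direction is $(2)\Rightarrow(1)$. Given a proper surjective $\pi\colon Y\to X$, condition (2) provides $N\in D^b_{\operatorname{coh}}(Y)$ with $\mathcal{O}_X$ a retract of $M:=\mathbb{R}\pi_\ast N$ in $D^b_{\operatorname{coh}}(X)$, hence also in $D_{\operatorname{Qcoh}}(X)$. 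Now I would invoke the projection formula: $M=\mathbb{R}\pi_\ast N$ is naturally a module over $B:=\mathbb{R}\pi_\ast\mathcal{O}_Y$ in the symmetric monoidal triangulated category $\mathcal{D}=D_{\operatorname{Qcoh}}(X)$, since $N$ is an $\mathcal{O}_Y$-module in $D_{\operatorname{Qcoh}}(Y)$ and $\mathbb{R}\pi_\ast$ is lax monoidal. With $A=\mathcal{O}_X$, the map $A\to B$ is the natural algebra map, $M$ is a $B$-module, and $A$ is a direct summand of $M$ as an $A$-module; Lemma~\ref{lem:splitting_via_thick_tricat} then yields that $\mathcal{O}_X\to\mathbb{R}\pi_\ast\mathcal{O}_Y$ splits in $D_{\operatorname{Qcoh}}(X)$, and since $D^b_{\operatorname{coh}}(X)$ is a full subcategory containing both objects, it splits there too. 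As this works for every proper surjective $\pi$, $X$ is a derived splinter.

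For the analogous splinter statement, I would run the same argument with $D^b_{\operatorname{coh}}(X)$ replaced by $\operatorname{coh} X$ (or rather, phrase level in $\operatorname{coh}X$ appropriately): for a finite surjective $\pi\colon Y\to X$ the functor $\pi_\ast$ is exact, $\pi_\ast\mathcal{O}_Y$ is a coherent $\mathcal{O}_X$-algebra, and $\pi_\ast\mathcal{F}$ is a $\pi_\ast\mathcal{O}_Y$-module for any coherent $\mathcal{F}$ on $Y$; then $\mathcal{O}_X$ being a retract of $\pi_\ast\mathcal{F}$ forces the splitting of $\mathcal{O}_X\to\pi_\ast\mathcal{O}_Y$ by the same module-theoretic argument (here one can argue directly in the abelian category, or apply Lemma~\ref{lem:splitting_via_thick_tricat} with $\mathcal{D}$ the derived category and note the relevant objects sit in degree zero). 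The main obstacle I anticipate is not any single hard step but making precise the two bookkeeping points: (i) that an arbitrary object of $\langle\mathbb{R}\pi_\ast D^b_{\operatorname{coh}}(Y)\rangle_1$ can be replaced by a single $\mathbb{R}\pi_\ast N$ up to retract, which rests on $\mathbb{R}\pi_\ast$ commuting with finite coproducts and shifts; and (ii) correctly identifying the $B$-module structure on $\mathbb{R}\pi_\ast N$ via the projection formula / lax monoidality of $\mathbb{R}\pi_\ast$, so that Lemma~\ref{lem:splitting_via_thick_tricat} genuinely applies. Once those are in place the proof is short.
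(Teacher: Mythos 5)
Your proof is correct and follows essentially the same route as the paper: the trivial direction $(1)\Rightarrow(2)$, then for $(2)\Rightarrow(1)$ reducing the hypothesis to $\mathcal{O}_X$ being a retract of $\mathbb{R}\pi_\ast N$ for a single $N\in D^b_{\operatorname{coh}}(Y)$ and invoking Lemma~\ref{lem:splitting_via_thick_tricat} with $\mathcal{D}=D_{\operatorname{Qcoh}}(X)$, $A=\mathcal{O}_X$, $B=\mathbb{R}\pi_\ast\mathcal{O}_Y$, $M=\mathbb{R}\pi_\ast N$. You are somewhat more explicit than the paper about the bookkeeping (why one may reduce to a single $N$ via commutation of $\mathbb{R}\pi_\ast$ with shifts and finite coproducts, and why $\mathbb{R}\pi_\ast N$ is a $B$-module via lax monoidality), but these are the same steps the paper's terse proof implicitly takes.
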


\begin{proof}
    We show the case for derived splinters. It is clear that $(1)\implies (2)$. Let $\pi \colon Y \to X$ be a proper surjective morphism. Our hypothesis tells us that $\mathcal{O}_X$ belongs to $\langle \mathbb{R} \pi_\ast D^b_{\operatorname{coh}}(Y) \rangle_1$. Hence, $\mathcal{O}_X$ belongs to $\langle \mathbb{R} \pi_\ast E \rangle_1$ for some $E$ in $D^b_{\operatorname{coh}}(Y)$. However, this ensures that natural map $\mathcal{O}_X \to \mathbb{R}\pi_\ast \mathcal{O}_Y$ splits in $D^b_{\operatorname{coh}}(X)$ via Lemma~\ref{lem:splitting_via_thick_tricat}. Since $\pi$ was arbitrary, we see that $X$ must be a derived splinter, and so $(2)\implies (1)$. The proof for splinters follows similarly.
\end{proof}

\begin{theorem}\label{thm:rational_singularities_via_generation}
    Suppose $X,Y$ are integral Noetherian schemes of characteristic zero. If $\pi \colon Y \to X$ is a proper surjective morphism where $Y$ has rational singularities, then the following are equivalent:
    \begin{enumerate}
        \item $X$ has rational singularities
        \item $\operatorname{level}^{\mathbb{R}\pi_\ast D^b_{\operatorname{coh}}(Y)} (\mathcal{O}_X)\leq 1$.
    \end{enumerate}
\end{theorem}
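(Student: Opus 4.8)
The plan is to reduce both implications to the splitting of the natural map $\mathcal{O}_X \to \mathbb{R}\pi_\ast \mathcal{O}_Y$ in $D^b_{\operatorname{coh}}(X)$, exactly as in the proof of Theorem~\ref{thm:derived_splinters_via_generation}, and then identify that splitting with the condition ``$X$ has rational singularities'' using the hypothesis that $Y$ has rational singularities. For $(2)\implies(1)$: if $\operatorname{level}^{\mathbb{R}\pi_\ast D^b_{\operatorname{coh}}(Y)}(\mathcal{O}_X)\leq 1$, then $\mathcal{O}_X$ lies in $\langle \mathbb{R}\pi_\ast E\rangle_1$ for some $E\in D^b_{\operatorname{coh}}(Y)$, so $\mathcal{O}_X$ is a retract of a finite coproduct of shifts of $\mathbb{R}\pi_\ast E$. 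Since $\mathbb{R}\pi_\ast$ commutes with shifts and finite coproducts, $\mathcal{O}_X$ is a retract of $\mathbb{R}\pi_\ast F$ for a single $F\in D^b_{\operatorname{coh}}(Y)$; applying Lemma~\ref{lem:splitting_via_thick_tricat} with $\mathcal{D}=D_{\operatorname{Qcoh}}(X)$, $A=\mathcal{O}_X$, $B=\mathbb{R}\pi_\ast\mathcal{O}_Y$, $M=\mathbb{R}\pi_\ast F$ (a $B$-module by the projection formula / functoriality of pushforward on module structures), the map $\mathcal{O}_X \to \mathbb{R}\pi_\ast\mathcal{O}_Y$ splits in $D^b_{\operatorname{coh}}(X)$.

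The next step is to conclude that $X$ has rational singularities from this splitting together with the hypothesis on $Y$. Here I would compose: let $\rho\colon \widetilde{Y}\to Y$ be a resolution of singularities (available since $Y$ is a variety, or more precisely quasi-excellent of equal characteristic zero), so that $\mathcal{O}_Y \to \mathbb{R}\rho_\ast\mathcal{O}_{\widetilde{Y}}$ is an isomorphism because $Y$ has rational singularities (equivalently, $Y$ is a derived splinter, Remark~\ref{rmk:rational_singularities_char_zero}). Then $\pi\circ\rho\colon \widetilde{Y}\to X$ is a proper surjective morphism with $\widetilde{Y}$ regular, and $\mathbb{R}(\pi\rho)_\ast\mathcal{O}_{\widetilde{Y}} \cong \mathbb{R}\pi_\ast\mathbb{R}\rho_\ast\mathcal{O}_{\widetilde{Y}} \cong \mathbb{R}\pi_\ast\mathcal{O}_Y$, so the split injection $\mathcal{O}_X \to \mathbb{R}\pi_\ast\mathcal{O}_Y$ becomes a splitting of $\mathcal{O}_X \to \mathbb{R}(\pi\rho)_\ast\mathcal{O}_{\widetilde{Y}}$. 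If $\pi$ is birational this directly exhibits $X$ as having rational singularities (after also checking normality — which follows since a scheme admitting such a splitting from a pushforth off a normal scheme along a finite-type map is normal, or one invokes that derived-splinter $+$ excellent $+$ char zero $\Rightarrow$ rational, again Remark~\ref{rmk:rational_singularities_char_zero} applied to the proper surjective $\pi\rho$: the splitting for this one map need not give the full derived splinter property, so here I must be more careful — see below). The cleaner route is: the existence of a single proper surjective $f\colon Z\to X$ with $Z$ having rational singularities and $\mathcal{O}_X\to\mathbb{R}f_\ast\mathcal{O}_Z$ split is known to force $X$ to be a derived splinter (this is in the spirit of Bhatt's descent arguments, \cite{Bhatt:2012}), hence to have rational singularities by Remark~\ref{rmk:rational_singularities_char_zero}.

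For $(1)\implies(2)$: if $X$ has rational singularities then $X$ is a derived splinter (Remark~\ref{rmk:rational_singularities_char_zero}), so in particular $\mathcal{O}_X \to \mathbb{R}\pi_\ast\mathcal{O}_Y$ splits for the given $\pi$; thus $\mathcal{O}_X$ is a retract of $\mathbb{R}\pi_\ast\mathcal{O}_Y \in \mathbb{R}\pi_\ast D^b_{\operatorname{coh}}(Y)$, which gives $\operatorname{level}^{\mathbb{R}\pi_\ast D^b_{\operatorname{coh}}(Y)}(\mathcal{O}_X)\leq 1$ immediately. The main obstacle I anticipate is the $(2)\implies(1)$ direction, specifically the passage from ``$\mathcal{O}_X\to\mathbb{R}\pi_\ast\mathcal{O}_Y$ splits for this one $\pi$, and $Y$ has rational singularities'' to ``$X$ has rational singularities'': one needs either a descent statement (splitting along a proper surjection from a rational-singularity scheme descends rationality, cf.\ \cite{Bhatt:2012, Kovac:2000}) or a direct cohomological argument controlling $\mathbb{R}^{>0}\pi_\ast$ and normality of $X$. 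I would look to reduce to the complete local case (where the Krull--Schmidt machinery of Section~\ref{sec:generation} and Lemma~\ref{lem:projective_over_complete_local_ring_krs_category} applies) and invoke the characteristic-zero equivalence of Remark~\ref{rmk:rational_singularities_char_zero} to close the loop.
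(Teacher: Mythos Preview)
Your proposal is correct and follows essentially the same approach as the paper: both directions reduce to the splitting of $\mathcal{O}_X \to \mathbb{R}\pi_\ast\mathcal{O}_Y$ via Lemma~\ref{lem:splitting_via_thick_tricat}, and the passage from that splitting (with $Y$ having rational singularities) to $X$ having rational singularities is exactly the descent step you flag as the main obstacle. The paper resolves that step in one line by citing \cite[Theorem 9.2]{Murayama:2024}, which is precisely the Kov\'acs-type criterion you allude to (``splitting along a proper surjection from a rational-singularity scheme descends rationality''); your detour through a resolution $\rho\colon\widetilde{Y}\to Y$ and the complete-local/Krull--Schmidt machinery is unnecessary once that reference is in hand.
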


\begin{proof}
    It is evident that $(1)\implies (2)$, so we show that $(2)\implies (1)$. If $\mathcal{O}_X$ is in $\langle \mathbb{R}\pi_\ast D^b_{\operatorname{coh}}(Y) \rangle_1$ for proper surjective morphism $\pi \colon Y \to X$ where $Y$ has rational singularities, then the natural map $\mathcal{O}_X \to \mathbb{R}\pi_\ast \mathcal{O}_Y$ splits in $D^b_{\operatorname{coh}}(X)$, see Lemma~\ref{lem:splitting_via_thick_tricat}. However, as $Y$ has rational singularities, then so must $X$, see \cite[Theorem 9.2]{Murayama:2024}. This completes the proof.
\end{proof}

\begin{theorem}\label{thm:du_bois_via_generation}
    If $X$ is an embeddable variety over a field of characteristic zero, then the following are equivalent:
    \begin{enumerate}
        \item $X$ has Du Bois singularities
        \item $\operatorname{level}^{\underline{\Omega}^0_X} (\mathcal{O}_X) \leq 1$.
    \end{enumerate}
\end{theorem}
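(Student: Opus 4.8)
The plan is to follow the template of Theorem~\ref{thm:derived_splinters_via_generation} and Theorem~\ref{thm:rational_singularities_via_generation}, with $\underline{\Omega}^0_X$ playing the role that $\mathbb{R}\pi_\ast\mathcal{O}_Y$ plays there; the input that makes this possible is the description of $\underline{\Omega}^0_X$ for embeddable $X$ recorded in Remark~\ref{rmk:du_bois_properties}. So I would first fix a closed immersion $X\hookrightarrow Y$ into a smooth variety, a log resolution $\pi\colon\widetilde{Y}\to Y$ of $(Y,X)$ that is an isomorphism away from $X$, set $E=\pi^{-1}(X)_{\mathrm{red}}\subseteq\widetilde{Y}$, and let $p\colon E\to X$ be the induced proper morphism. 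Remark~\ref{rmk:du_bois_properties} then supplies an isomorphism $\underline{\Omega}^0_X\cong\mathbb{R}p_\ast\mathcal{O}_E$ in $D^b_{\operatorname{coh}}(X)$, together with the fact that $X$ has Du Bois singularities precisely when the natural algebra map $\mathcal{O}_X\to\mathbb{R}p_\ast\mathcal{O}_E$ splits in $D^b_{\operatorname{coh}}(X)$.

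The implication $(1)\Rightarrow(2)$ is immediate: if $X$ is Du Bois then $\mathcal{O}_X\to\underline{\Omega}^0_X$ is an isomorphism, so $\mathcal{O}_X$ lies in $\operatorname{add}(\underline{\Omega}^0_X)=\langle\underline{\Omega}^0_X\rangle_1$, whence $\operatorname{level}^{\underline{\Omega}^0_X}(\mathcal{O}_X)\le 1$.

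For $(2)\Rightarrow(1)$ I would argue as follows. Assuming $\mathcal{O}_X\in\langle\underline{\Omega}^0_X\rangle_1=\operatorname{add}(\underline{\Omega}^0_X)$ and transporting along $\underline{\Omega}^0_X\cong\mathbb{R}p_\ast\mathcal{O}_E$, the object $\mathcal{O}_X$ becomes a retract in $D_{\operatorname{Qcoh}}(X)$ of a finite direct sum $\bigoplus_j\mathbb{R}p_\ast\mathcal{O}_E[n_j]\cong\mathbb{R}p_\ast N$, where $N:=\bigoplus_j\mathcal{O}_E[n_j]$ is a module over the unit algebra $\mathcal{O}_E$ in $D_{\operatorname{Qcoh}}(E)$. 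Since $\mathbb{R}p_\ast$ is lax symmetric monoidal, $M:=\mathbb{R}p_\ast N$ is a module over $B:=\mathbb{R}p_\ast\mathcal{O}_E$ in $\mathcal{D}:=D_{\operatorname{Qcoh}}(X)$, and $\mathcal{O}_X$ is a direct summand of $M$ regarded as an $\mathcal{O}_X$-module (every retraction in $D_{\operatorname{Qcoh}}(X)$ is automatically $\mathcal{O}_X$-linear). Applying Lemma~\ref{lem:splitting_via_thick_tricat} to the algebra map $A=\mathcal{O}_X\to B=\mathbb{R}p_\ast\mathcal{O}_E$ with this $M$ produces a splitting of $\mathcal{O}_X\to\mathbb{R}p_\ast\mathcal{O}_E$ in $D_{\operatorname{Qcoh}}(X)$, which descends to $D^b_{\operatorname{coh}}(X)$ because the latter is a full subcategory; Remark~\ref{rmk:du_bois_properties} then gives that $X$ is Du Bois.

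The part needing the most care — exactly as in the two preceding theorems — is the module bookkeeping: checking that the canonical identification $\mathbb{R}p_\ast N\cong\bigoplus_j\mathbb{R}p_\ast\mathcal{O}_E[n_j]$ is compatible with the $B$-module structures coming from the lax monoidal structure of $\mathbb{R}p_\ast$, and ensuring that the abstract splitting delivered by Lemma~\ref{lem:splitting_via_thick_tricat} is a splitting of \emph{the} natural map $\mathcal{O}_X\to\underline{\Omega}^0_X$ used in the definition of Du Bois singularities. The latter is precisely why I would route the argument through the criterion in Remark~\ref{rmk:du_bois_properties} rather than attempting to work with $\underline{\Omega}^0_X$ and its ring structure directly. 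Everything else is formal.
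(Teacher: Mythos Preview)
Your proposal is correct and follows essentially the same approach as the paper: both fix the log resolution data $p\colon E\to X$, identify $\underline{\Omega}^0_X\cong\mathbb{R}p_\ast\mathcal{O}_E$ via Remark~\ref{rmk:du_bois_properties}, and then invoke Lemma~\ref{lem:splitting_via_thick_tricat} to pass from $\mathcal{O}_X\in\langle\mathbb{R}p_\ast\mathcal{O}_E\rangle_1$ to a splitting of $\mathcal{O}_X\to\mathbb{R}p_\ast\mathcal{O}_E$. The paper's proof is terser, simply asserting that the lemma applies, whereas you spell out the $B$-module bookkeeping; your care there is warranted but not strictly necessary, since the lemma already delivers a splitting of \emph{the algebra map} $A\to B$, which is precisely the natural map in question.
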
 

\begin{proof}
    It is evident that $(1)\implies (2)$, so we check the converse. Since $X$ is embeddable, there exists a closed immersion $i \colon X \to Y$ such that $Y$ is a smooth variety over $k$. Consider a log resolution $\pi \colon \widetilde{Y}\to Y$ of the pair $(Y,X)$ that is an isomorphism outside of $X$. Let $E$ denote the inverse image of $X$ along $\pi$ with the reduced induced subscheme structure, and $p \colon E \to X$ the natural morphism. By Remark~\ref{rmk:du_bois_properties}, we have that $\mathbb{R}p_\ast \mathcal{O}_E$ is isomorphic to $\underline{\Omega}^0_X$ in $D^b_{\operatorname{coh}}(X)$. If $\mathcal{O}_X$ is in $\langle \mathbb{R}p_\ast \mathcal{O}_E \rangle_1$, Lemma~\ref{lem:splitting_via_thick_tricat} tells us the natural map $\mathcal{O}_X \to \mathbb{R}p_\ast \mathcal{O}_E$ splits. Hence, Remark~\ref{rmk:du_bois_properties} tells us $X$ has Du Bois singularities.
\end{proof}

\begin{remark}
In the statements above, the special case where $X$ is additionally \textit{proper} is interesting. We can appeal to Lemma~\ref{lem:projective_over_complete_local_ring_krs_category} to check that $D^b_{\operatorname{coh}}(X)$ is a Krull-Schmidt category, and hence, observe the following with the respective notations as earlier:
\begin{enumerate}
    \item $X$ is a derived splinter if, and only if, $\operatorname{level}^{\mathcal{O}_X} (\mathbb{R}\pi_\ast E) \leq 1$ for some object $E$ in $D^b_{\operatorname{coh}}(Y)$
    \item $X$ has rational singularities if, and only if, $\operatorname{level}^{\mathcal{O}_X} (\mathbb{R}\pi_\ast E) \leq 1$ for some object $E$ in $D^b_{\operatorname{coh}}(Y)$
    \item $X$ has Du Bois singularities if, and only if, $\operatorname{level}^{\mathcal{O}_X} (\underline{\Omega}^0_X) \leq 1$ .
\end{enumerate}
\end{remark}

\begin{proof}[Proof of Theorem~\ref{introthm:normality_via_generation}]
    Before we begin the proof, observe that since $\nu$ is a finite morphism, we have $\nu_\ast \mathcal{O}_{X^\nu} = \mathbb{R} \nu_\ast \mathcal{O}_{X^\nu}$. If $X$ is normal, then $\mathcal{O}_X \simeq \nu_\ast \mathcal{O}_{X^\nu}$ and so $(1)\implies (2)$. It suffices to show $(2)\implies (1)$.
    
    Suppose $\mathcal{O}_X$ is in $\langle \nu_\ast \mathcal{O}_{X^\nu} \rangle_1$ in $D^b_{\operatorname{coh}}(X)$. If $X$ is a reduced Nagata scheme, then the normalization $\nu \colon X^\nu \to X$ is a finite birational morphism, see \cite[\href{https://stacks.math.columbia.edu/tag/035S}{Tag 035S}]{StacksProject}. By Lemma~\ref{lem:splitting_via_thick_tricat}, the natural map $\mathcal{O}_X \to \nu_\ast \mathcal{O}_{X^\nu}$ splits in $D^b_{\operatorname{coh}} (X)$. Hence, there exists a split short exact sequence in $\operatorname{coh}X$:
    \begin{displaymath}
        0 \to \mathcal{O}_X \to \nu_\ast \mathcal{O}_{X^\nu} \to C \to 0.
    \end{displaymath}
    Let $\eta$ be the generic point of $X$. Note that $\nu_\ast \mathcal{O}_{X^\nu}$ is a torsion free sheaf of rank one, see \cite[Proposition 7.4.5]{EGAI/II/III:IHES}. Passing to the generic point, we see that $C_\eta$ must vanish, and so $C$ is a torsion sheaf on $X$. However, $C$ is a direct summand of the torsion free sheaf $\nu_\ast \mathcal{O}_{X^\nu}$, so $C=0$. This tells us that the natural map $\mathcal{O}_X \to \nu_\ast \mathcal{O}_{X^\nu}$ is an isomorphism. Thus, every affine open $\operatorname{Spec}(R)$ of $X$ is normal since $R \to R^\nu$ is an isomorphism, and so we get that $X$ is normal.
\end{proof}

\begin{lemma}\label{lem:bhatt_lemma}(Bhatt)
    Suppose $f\colon Y \to X$ is a proper birational map of Noetherian schemes such that $E:=\mathbb{R}\pi_\ast \mathcal{O}_Y$ has finite Tor dimension (or equivalently, is a perfect complex). Then the natural map $\mathcal{O}_X \to \mathbb{R}\pi_\ast \mathcal{O}_Y$ splits.
\end{lemma}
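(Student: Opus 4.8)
The plan is to deduce the splitting from Lemma~\ref{lem:splitting_via_thick_tricat}, applied in $\mathcal{D} = D_{\operatorname{Qcoh}}(X)$ to the algebra map $A = \mathcal{O}_X \to B = E := \mathbb{R}\pi_\ast \mathcal{O}_Y$ (the target being a commutative algebra object, since $\mathbb{R}\pi_\ast$ is lax symmetric monoidal, with $\mathcal{O}_X \to E$ as its unit), taking as the $B$-module $M := \mathbb{R}\mathcal{H}om_{\mathcal{O}_X}(E, E)$. Since $\pi$ is proper and $X$ is Noetherian, $E$ lies in $D^b_{\operatorname{coh}}(X)$, and the hypothesis of finite Tor dimension makes it an object of $\operatorname{perf}X$; in particular $E$ is dualizable, so $M \cong E^\vee \otimes_{\mathcal{O}_X} E$ with $E^\vee := \mathbb{R}\mathcal{H}om_{\mathcal{O}_X}(E, \mathcal{O}_X)$, and $M$ inherits an $E$-module structure from the $E$-action on the right-hand tensor factor. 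The remaining point is to check that $\mathcal{O}_X$ is a direct summand of $M$, viewed as an $\mathcal{O}_X$-module.

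For this I would use that $E$ has trivial Euler characteristic. Because $\pi$ is birational there is a dense open $U \subseteq X$ with $\pi^{-1}(U) \to U$ an isomorphism, and (flat) base change along $U \hookrightarrow X$ gives $E|_U \cong \mathcal{O}_U$, with the structure map $\mathcal{O}_X \to E$ restricting to the identity over $U$. The Euler characteristic of the perfect complex $E$ is a locally constant $\mathbb{Z}$-valued function on $X$, which equals $1$ on the dense open $U$ and hence equals $1$ on all of $X$. Now consider the two $\mathcal{O}_X$-linear maps $\eta \colon \mathcal{O}_X \to M$ sending $1$ to $\operatorname{id}_E$, and the trace map $\operatorname{tr}\colon M \to \mathcal{O}_X$ (both available for the dualizable object $E$): their composite $\operatorname{tr} \circ \eta$ is multiplication by $\chi(E) = 1$, hence is the identity of $\mathcal{O}_X$. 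Thus $\mathcal{O}_X$ is a retract of $M$ in $D_{\operatorname{Qcoh}}(X)$.

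With this in hand, Lemma~\ref{lem:splitting_via_thick_tricat} immediately yields a splitting of $\mathcal{O}_X \to E$ in $D_{\operatorname{Qcoh}}(X)$, and since $\mathcal{O}_X$ and $E$ both lie in the full triangulated subcategory $D^b_{\operatorname{coh}}(X)$, the splitting holds there as well. I expect the only delicate points to be bookkeeping ones: first, justifying at the stated level of generality that $\mathbb{R}\mathcal{H}om_{\mathcal{O}_X}(E, E)$ is a (homotopy-)module over the algebra object $E$ and that the trace map satisfies $\operatorname{tr}(\operatorname{id}_E) = \chi(E)$ — these are standard for perfect complexes, e.g.\ by representing $E$ by a bounded complex of finite locally free sheaves — and second, the verification that density of $U$ forces the locally constant function $\chi(E)$ to be identically $1$. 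Everything else is formal.
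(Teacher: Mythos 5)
Your proof is correct and rests on exactly the same key ingredient as the paper's: the trace map on the perfect complex $\mathbb{R}\mathcal{H}\mathrm{om}_X(E,E)$, together with birationality to identify the resulting scalar as $1$. The only structural difference is packaging: the paper writes down the candidate splitting $E \to \mathbb{R}\mathcal{H}\mathrm{om}_X(E,E) \xrightarrow{\operatorname{tr}} \mathcal{O}_X$ directly and checks that the composite with the unit $\mathcal{O}_X \to E$ is the identity by restricting to a dense open, whereas you first establish that $\mathcal{O}_X$ is a retract of $\mathbb{R}\mathcal{H}\mathrm{om}_X(E,E)$ and then route through Lemma~\ref{lem:splitting_via_thick_tricat}. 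That detour is logically unnecessary here --- the retraction $M\to\mathcal{O}_X$ you produce, precomposed with the action map $E\to M$, already is the splitting --- but it is harmless and correct. Your justification of why the scalar is $1$ (the Euler characteristic of a perfect complex is a locally constant $\mathbb{Z}$-valued function, equal to $1$ on the dense open $U$ where $\pi$ is an isomorphism, hence $1$ everywhere) is spelled out more carefully than the paper's terse ``it is the identity generically,'' and has the small virtue of not implicitly relying on global sections of $\mathcal{O}_X$ being detected on a dense open.
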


\begin{proof}
    Since $E$ is a perfect complex, there is a trace map $\mathrm{tr} \colon \mathbb{R}\mathcal{H}\mathrm{om}_X(E,E) \to \mathcal{O}_X$. Since $E$ is an algebra, there is an action map $E \to \mathbb{R}\mathcal{H}\mathrm{om}_X(E,E)$. We can then consider the composition of natural maps:
    \begin{displaymath}  
        \mathcal{O}_X \to E \to \mathbb{R}\mathcal{H}\mathrm{om}_X(E,E) \xrightarrow{\operatorname{tr}} \mathcal{O}_X.
    \end{displaymath}
    This composition is the identity since it is an identity generically on $X$ by the birationality assumption.
\end{proof}

\begin{proposition}\label{prop:rational_singularities_perfect}
    Suppose $X,Y$ are integral Noetherian schemes of characteristic zero. If $\pi \colon Y \to X$ is a proper birational morphism where $Y$ has rational singularities, then the following are equivalent:
    \begin{enumerate}
        \item $X$ has rational singularities
        \item $\operatorname{level}^{\mathbb{R}\pi_\ast D^b_{\operatorname{coh}}(Y)} (\mathcal{O}_X) \leq 1$
        \item $\mathbb{R}\pi_\ast \mathcal{O}_Y$ is in $\operatorname{perf}X$.
    \end{enumerate}
\end{proposition}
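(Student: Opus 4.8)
The plan is to close a short cycle of implications, leaning on work already done. First note that a proper birational morphism $\pi\colon Y\to X$ between integral schemes is automatically proper surjective: its image is closed (properness) and contains the generic point of $X$ (birationality), hence is all of $X$. So the equivalence $(1)\Leftrightarrow(2)$ is exactly Theorem~\ref{thm:rational_singularities_via_generation}, and it only remains to splice $(3)$ into the picture; I would do this by proving $(1)\Rightarrow(3)$ and $(3)\Rightarrow(2)$.

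For $(3)\Rightarrow(2)$: if $\mathbb{R}\pi_\ast\mathcal{O}_Y\in\operatorname{perf}X$, then Lemma~\ref{lem:bhatt_lemma} applies (a proper birational map whose pushforward of the structure sheaf is perfect), so the natural map $\mathcal{O}_X\to\mathbb{R}\pi_\ast\mathcal{O}_Y$ splits in $D^b_{\operatorname{coh}}(X)$. Since $\mathcal{O}_Y\in D^b_{\operatorname{coh}}(Y)$ and $\pi$ is proper, $\mathbb{R}\pi_\ast\mathcal{O}_Y$ is an object of $\mathbb{R}\pi_\ast D^b_{\operatorname{coh}}(Y)$, and a split injection exhibits $\mathcal{O}_X$ as a retract of it; hence $\mathcal{O}_X\in\langle\mathbb{R}\pi_\ast D^b_{\operatorname{coh}}(Y)\rangle_1$, i.e.\ $\operatorname{level}^{\mathbb{R}\pi_\ast D^b_{\operatorname{coh}}(Y)}(\mathcal{O}_X)\leq 1$.

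For $(1)\Rightarrow(3)$, which I expect to be the crux: choose a resolution of singularities $g\colon\widetilde{X}\to Y$ (available since $Y$ is an integral quasi-excellent scheme of equal characteristic zero with rational — hence normal — singularities). Then $f:=\pi\circ g\colon\widetilde{X}\to X$ is proper, birational, and has regular source, so it is a resolution of $X$. Because $Y$ has rational singularities, $\mathbb{R}g_\ast\mathcal{O}_{\widetilde{X}}\simeq\mathcal{O}_Y$ (higher direct images vanish and $g_\ast\mathcal{O}_{\widetilde{X}}=\mathcal{O}_Y$ by normality); because $X$ has rational singularities, $\mathbb{R}f_\ast\mathcal{O}_{\widetilde{X}}\simeq\mathcal{O}_X$ for the same reason. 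The composition isomorphism $\mathbb{R}f_\ast\simeq\mathbb{R}\pi_\ast\circ\mathbb{R}g_\ast$ then yields
\[
\mathcal{O}_X\simeq\mathbb{R}f_\ast\mathcal{O}_{\widetilde{X}}\simeq\mathbb{R}\pi_\ast\mathbb{R}g_\ast\mathcal{O}_{\widetilde{X}}\simeq\mathbb{R}\pi_\ast\mathcal{O}_Y,
\]
so $\mathbb{R}\pi_\ast\mathcal{O}_Y$ is isomorphic to $\mathcal{O}_X$, which is perfect. This closes the cycle.

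The main obstacle is precisely the $(1)\Rightarrow(3)$ step: it depends on the existence of a resolution of $Y$ (so that $\pi$ is dominated by a resolution of $X$), on the exact shape of the definition of rational singularities (normality together with vanishing of higher direct images from a resolution), and on the compatibility $\mathbb{R}(\pi g)_\ast\simeq\mathbb{R}\pi_\ast\mathbb{R}g_\ast$. One should also check the routine point that $\pi\circ g$ genuinely qualifies as a resolution in the sense used — i.e.\ that birationality and regularity of the source are inherited from $g$ and $\pi$. As an alternative to the $(3)\Rightarrow(2)$ argument above, one could argue $(3)\Rightarrow(1)$ directly: the split map $\mathcal{O}_X\to\mathbb{R}\pi_\ast\mathcal{O}_Y$ coming from Lemma~\ref{lem:bhatt_lemma}, together with the hypothesis that $Y$ has rational singularities, forces $X$ to have rational singularities by \cite[Theorem 9.2]{Murayama:2024}, exactly as in the proof of Theorem~\ref{thm:rational_singularities_via_generation}; either route completes the argument.
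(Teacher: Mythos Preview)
Your proof is correct and follows essentially the same approach as the paper: $(1)\Leftrightarrow(2)$ via Theorem~\ref{thm:rational_singularities_via_generation}, and $(3)$ linked in using Lemma~\ref{lem:bhatt_lemma}. The paper's $(1)\Rightarrow(3)$ is the one-liner ``the definition of rational singularities promises us $(1)\Rightarrow(3)$,'' which your resolution-factoring argument ($\mathbb{R}\pi_\ast\mathcal{O}_Y\simeq\mathcal{O}_X$) unpacks explicitly, and the paper closes with $(3)\Rightarrow(1)$ directly via \cite[Theorem 9.2]{Murayama:2024} (your stated alternative) rather than your $(3)\Rightarrow(2)$.
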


\begin{proof}
    By Theorem~\ref{thm:rational_singularities_via_generation}, we know that $(1) \iff (2)$. The definition of rational singularities promises us $(1)\implies (3)$. If $(3)$ holds, then Lemma~\ref{lem:bhatt_lemma} ensures that the natural map $\mathcal{O}_X \to \mathbb{R}\pi_\ast \mathcal{O}_Y$ splits in $D^b_{\operatorname{coh}}(X)$. From this, it follows that $X$ has rational singularities, see \cite[Theorem 9.2]{Murayama:2024}.
\end{proof}

\section{Examples}
\label{sec:examples}

This section establishes explicit computations yielding bounds on the number of cones needed to finitely build $\mathcal{O}_X$ in $D^b_{\operatorname{coh}}(X)$ from a given resolution of singularities $\pi \colon \widetilde{X} \to X$ for a variety $X$ over a field. The examples of interest include one-dimensional Nagata schemes (i.e. curves) and affine cones over a smooth projective hypersurface. 

\subsection{One-dimensional Nagata schemes}
\label{sec:one_dimensional_schemes}

Let us recall some useful properties for one-dimensional Nagata schemes.

\begin{remark}
    Let $X$ be a reduced Nagata one-dimensional scheme and $p$ be a closed point of $X$.
    \begin{enumerate}
        \item The normalization $\nu \colon X^\nu \to X$ is a finite birational morphism, see \cite[\href{https://stacks.math.columbia.edu/tag/0C1R}{Tag OC1R}]{StacksProject}. 
        \item The \textbf{$\delta$-invariant} of $\mathcal{O}_{X,p}$, denoted $\delta_p$, is defined as the length of $\mathcal{A}_p/\mathcal{O}_{X,p}$ as an $\mathcal{O}_{X,p}$-module where $\mathcal{A}_p$ is the integral closure of $\mathcal{O}_{X,p}$ in the total ring of fractions of $\mathcal{O}_{X,p}$, see \cite[\href{https://stacks.math.columbia.edu/tag/0C3Q}{Tag 0C3Q}]{StacksProject} for details. \item The $\delta$-invariant of $\mathcal{O}_{X,p}$ is bounded below by one less than number of its geometric branches \cite[\href{https://stacks.math.columbia.edu/tag/0C43}{Tag 0C43}]{StacksProject}. Equivalently, it is bounded below by the following (see \cite[\href{https://stacks.math.columbia.edu/tag/0C1S}{Tag 0C1S}]{StacksProject}):
        \begin{displaymath}
            \sum_{\nu(q)=p} |\kappa (q): \kappa(p)|_s
        \end{displaymath}
        where $\nu \colon X^\nu \to X$ is the normalization of $X$ and the sum ranges over all $q$ in $X^\nu$ such that $\nu(q)=p$.
    \end{enumerate}
\end{remark}

\begin{proposition}\label{prop:rouquier_dimension_curves}
    Let $X$ be a reduced Nagata one-dimensional scheme. If $\nu \colon X^\nu \to X$ is the normalization of $X$, then the Rouquier dimension of $D^b_{\operatorname{coh}}(X)$ is bounded above by the following value:
    \begin{displaymath}
        (\dim D^b_{\operatorname{coh}}(X^\nu) +1 )(1 + \underset{s \in \operatorname{Sing}(X)}{\max} \{ \delta_s\}) -1
    \end{displaymath}
    where $\delta_s$ denotes the $\delta$-invariant of $X$ at $s$.
\end{proposition}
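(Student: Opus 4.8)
The plan is to reduce the computation of the Rouquier dimension of $D^b_{\operatorname{coh}}(X)$ to two ingredients: the Rouquier dimension of $D^b_{\operatorname{coh}}(X^\nu)$ (which appears in the bound) and the number of cones needed to build $\mathcal{O}_X$ out of $\mathbb{R}\nu_\ast D^b_{\operatorname{coh}}(X^\nu) = \nu_\ast D^b_{\operatorname{coh}}(X^\nu)$ (here $\nu$ is finite, so $\mathbb{R}\nu_\ast = \nu_\ast$). First I would take a strong generator $G$ for $D^b_{\operatorname{coh}}(X^\nu)$ realizing $\dim D^b_{\operatorname{coh}}(X^\nu)$, so that $\langle G\rangle_{d+1} = D^b_{\operatorname{coh}}(X^\nu)$ with $d := \dim D^b_{\operatorname{coh}}(X^\nu)$; a natural candidate for a generator of $D^b_{\operatorname{coh}}(X)$ is then $\nu_\ast G \oplus \mathcal{O}_X$, or just $\nu_\ast G$ if one can already build $\mathcal{O}_X$ from it. Since $\nu_\ast$ is exact (finite morphism) and sends distinguished triangles to distinguished triangles, it is a triangulated functor, so $\nu_\ast\langle G\rangle_{n} \subseteq \langle \nu_\ast G\rangle_{n}$ for all $n$; hence every object of the form $\nu_\ast F$ with $F \in D^b_{\operatorname{coh}}(X^\nu)$ lies in $\langle \nu_\ast G\rangle_{d+1}$.

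The key local input is that for a reduced Nagata one-dimensional scheme $X$, every coherent sheaf — and more generally every object of $D^b_{\operatorname{coh}}(X)$ — can be finitely built from $\nu_\ast D^b_{\operatorname{coh}}(X^\nu)$ using a controlled number of cones, governed by the $\delta$-invariants. Concretely, I expect to show $\operatorname{level}^{\nu_\ast D^b_{\operatorname{coh}}(X^\nu)}(E) \leq 1 + \max_{s \in \operatorname{Sing}(X)}\{\delta_s\}$ for every $E \in D^b_{\operatorname{coh}}(X)$; the affine case of this is exactly the second displayed inequality from the companion Introduction theorem (Corollary~\ref{cor:affine_curve_level}), and the global statement follows since the singular locus is a finite set of closed points and the bound is local in nature (one can work with a finite affine cover, and the building can be done one singular point at a time using that $\mathcal{O}_X \to \nu_\ast\mathcal{O}_{X^\nu}$ is an isomorphism away from $\operatorname{Sing}(X)$). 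The mechanism behind the $\delta_s$ bound is the filtration of $\nu_\ast \mathcal{O}_{X^\nu}/\mathcal{O}_X$, a torsion module of length $\delta_s$ at each singular point $s$, by short exact sequences with residue-field quotients — each such short exact sequence contributes one cone, and a module of length $\ell$ is reached in $\ell$ steps, so $\mathcal{O}_X$ sits in $\langle \nu_\ast \mathcal{O}_{X^\nu}, \text{skyscrapers}\rangle$; the skyscraper sheaves themselves are quotients (hence cones up to shift) of $\nu_\ast$ of structure sheaves of points of $X^\nu$, keeping everything inside $\langle \nu_\ast D^b_{\operatorname{coh}}(X^\nu)\rangle$ at bounded level.

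Combining the two ingredients: given any $E \in D^b_{\operatorname{coh}}(X)$, build it first from objects $\nu_\ast F$ ($F \in D^b_{\operatorname{coh}}(X^\nu)$) using at most $1 + \max_s\{\delta_s\}$ cones, then replace each such $\nu_\ast F$ by an expression in $\langle \nu_\ast G\rangle_{d+1}$. The standard "composition" estimate for levels — if $E \in \langle \mathcal{C}\rangle_m$ and $\mathcal{C} \subseteq \langle G\rangle_n$ then $E \in \langle G\rangle_{mn}$ (see \cite{ABIM:2010}) — then gives $E \in \langle \nu_\ast G\rangle_{(d+1)(1 + \max_s\{\delta_s\})}$, i.e. $\dim D^b_{\operatorname{coh}}(X) \leq (d+1)(1 + \max_s\{\delta_s\}) - 1$, which is precisely the claimed bound.

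I expect the main obstacle to be the global-to-local bookkeeping in establishing the $1 + \max_s\{\delta_s\}$ bound for building $\mathcal{O}_X$ (and then arbitrary $E$) from $\nu_\ast D^b_{\operatorname{coh}}(X^\nu)$ when $X$ is not affine: one must check that the cone count does not accumulate across the finitely many singular points (it should not, since the relevant torsion modules are supported at disjoint points, so the short exact sequences can be amalgamated into a single filtration of length $\max_s\{\delta_s\}$, not $\sum_s \delta_s$) and that the passage from $\mathcal{O}_X$ to a general object $E$ — via the triangle relating $E$ to $\nu_\ast\nu^\ast E$ (or rather $E \otimes^{\mathbb{L}} \nu_\ast\mathcal{O}_{X^\nu}$) — does not cost more than building $\mathcal{O}_X$ itself does. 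Making this precise likely uses that $E$ is a retract of $E \otimes^{\mathbb{L}}_{\mathcal{O}_X} \nu_\ast\mathcal{O}_{X^\nu}$ once $\mathcal{O}_X$ is a retract of $\nu_\ast\mathcal{O}_{X^\nu}$-built objects, mirroring the proof of Lemma~\ref{lem:splitting_via_thick_tricat}, together with the projection formula to identify $E \otimes^{\mathbb{L}} \nu_\ast\mathcal{O}_{X^\nu} \simeq \nu_\ast(\nu^\ast E)$ as an object of the form $\nu_\ast(-)$.
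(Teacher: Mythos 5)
Your argument for the first ingredient---the bound $\operatorname{level}^{\nu_\ast G}(\mathcal{O}_X)\leq 1+\max_{s}\{\delta_s\}$---runs essentially parallel to the paper's proof: both use the short exact sequence $0\to\mathcal{O}_X\to\nu_\ast\mathcal{O}_{X^\nu}\to\bigoplus_s\mathcal{Q}_s\to 0$, the fact that each torsion quotient $\mathcal{Q}_s$ has length $\delta_s$, and the fact that the residue field $\kappa(s)$ is a retract (not merely a quotient, as you write---the retract is what keeps $\kappa(s)$ at level $1$ rather than $2$) of $\nu_\ast$ applied to the structure sheaf of the fiber over $s$; the paper makes this precise via a fiber square and by enlarging $G$ so that it contains the sheaves $q_\ast\mathcal{O}_{Z_s}$. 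The ``$\max$ not $\sum$'' point you flag is also exactly what the paper uses, since the $\mathcal{Q}_s$ are supported at disjoint closed points.

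The genuine gap is your second ingredient: the passage from $\mathcal{O}_X$ to an arbitrary $E\in D^b_{\operatorname{coh}}(X)$. You propose to apply $-\otimes^{\mathbb{L}}E$ to a building of $\mathcal{O}_X$ from $\nu_\ast\mathcal{O}_{X^\nu}$ and then use the projection formula $E\otimes^{\mathbb{L}}\nu_\ast\mathcal{O}_{X^\nu}\simeq\nu_\ast(\mathbb{L}\nu^\ast E)$. But $\nu_\ast\mathcal{O}_{X^\nu}$ is almost never of finite Tor dimension over a singular one-dimensional $\mathcal{O}_X$ (e.g.\ over the cusp $k[x,y]_{(x,y)}/(y^2-x^3)$ the normalization has infinite projective dimension by Auslander--Buchsbaum), so $\mathbb{L}\nu^\ast E$ is generally unbounded when $E$ is not perfect and hence does not lie in $D^b_{\operatorname{coh}}(X^\nu)$. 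In particular $\nu_\ast(\mathbb{L}\nu^\ast E)$ need not lie in $\langle\nu_\ast G\rangle_{d+1}$, and the composition estimate $(d+1)(1+\max_s\delta_s)$ does not follow from your mechanism. Your appeal to ``$E$ is a retract of $E\otimes^{\mathbb{L}}\nu_\ast\mathcal{O}_{X^\nu}$'' also doesn't help: that would require $\mathcal{O}_X\to\nu_\ast\mathcal{O}_{X^\nu}$ to actually split (i.e.\ $X$ normal), not merely $\mathcal{O}_X\in\langle\nu_\ast\mathcal{O}_{X^\nu}\rangle_k$ for some $k>1$. The paper avoids all of this by proving only the $\mathcal{O}_X$ bound and then invoking an external descent/ascent result \cite[Proposition 3.16]{Lank/Olander:2024}, which converts $\operatorname{level}^{\nu_\ast G}(\mathcal{O}_X)\leq m$, together with the generation time of $G$ on $D^b_{\operatorname{coh}}(X^\nu)$, into the stated bound on $\dim D^b_{\operatorname{coh}}(X)$ by a different argument. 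To complete your proposal you would need to either establish that ascent statement or find another route to control $\operatorname{level}^{\nu_\ast G}(E)$ for general $E$ that does not pass through $\mathbb{L}\nu^\ast E$.
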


\begin{proof}
    Let $G$ be a compact generator for $D_{\operatorname{Qcoh}}(X^\nu)$, and assume that $\mathcal{O}_{X^\nu}$ is a direct summand of $G$. Throughout this proof, we will freely utilize various part of \cite[\href{https://stacks.math.columbia.edu/tag/0C1R}{Tag OC1R}]{StacksProject}. Note that $\nu$ is a finite birational morphism. There exists a short exact sequence in $\operatorname{coh}X$:
    \begin{displaymath}
        0 \to \mathcal{O}_X \to \nu_\ast \mathcal{O}_{X_\nu} \to \bigoplus_{s\in \operatorname{Sing}(X)} \mathcal{Q}_s \to 0
    \end{displaymath}
    where $\mathcal{Q}_s$ is a skyscraper sheaf supported on the singular point $s$ in $X$. Note that each $\mathcal{Q}_s$, for $s$ in $\operatorname{Sing}(X)$, has length equal to the $\delta$-invariant of $X$ at $s$, which we denote by $\delta_s$. For each $s$ in $\operatorname{Sing}(X)$, consider the fiber square:
    \begin{displaymath}
        \begin{tikzcd}
            Z_s \ar[r, "p"] \ar[d, "q"] & \operatorname{Spec}(\kappa(s)) \ar[d, "i"]\\
            X^\nu \ar[r, "\nu"] & X.
        \end{tikzcd}
    \end{displaymath}
    First, since the maps are finite, we have that $\mathcal{O}_{\operatorname{Spec}{\kappa(s)}} \to p_\ast \mathcal{O}_{Z_s}$ splits, and so $\mathcal{O}_{\operatorname{Spec}{\kappa(s)}}$ belongs to $\langle p_\ast \mathcal{O}_{Z_s} \rangle_1$. This implies that $i_\ast \mathcal{O}_{\operatorname{Spec}{\kappa(s)}}$ belongs to $\langle (i \circ p)_\ast\mathcal{O}_{Z_s} \rangle_1$, and since $(i \circ p)_\ast = (\nu \circ q)_\ast$, we have $i_\ast \mathcal{O}_{\operatorname{Spec}{\kappa(s)}}$ belongs to $\langle (\nu \circ q)_\ast \mathcal{O}_{Z_s} \rangle_1$ and hence, $ \operatorname{level}^{(\nu \circ q)_\ast \mathcal{O}_{Z_s}}(i_\ast \mathcal{O}_s) \leq  1$. Since $X$ has finitely many singular points, we may assume $G$ finitely builds each $q_\ast \mathcal{O}_{Z_s}$ in at most one step by simply adding to $G$ a copy of such object. We have $\operatorname{level}^{\nu_\ast G}(i_\ast \mathcal{O}_{\operatorname{Spec}{\kappa(s)}}) \leq 1$, and so $\operatorname{level}^{\nu_\ast G}(Q_s) \leq \operatorname{level}^{\nu_\ast G}(i_\ast \mathcal{O}_{\operatorname{Spec}{\kappa(s)}}) \cdot \operatorname{level}^{i_\ast \mathcal{O}_{\operatorname{Spec}{\kappa(s)}}}(Q_s)\leq \delta_s$ as $Q_s$ is of length $\delta_s$. Now, we finish the proof by observing that:
    \begin{displaymath}
        \begin{aligned}
            \operatorname{level}^{\nu_\ast G} (\mathcal{O}_X) &\leq \operatorname{level}^{\nu_\ast G}(\nu_\ast \mathcal{O}_{X^\nu}) + \operatorname{level}^{\nu_\ast \mathcal{O}_{X^\nu}}(\bigoplus_{s \in \operatorname{Sing}(X)} Q_s)\\
            &= 1 + \max_{s \in \operatorname{Sing}(X)} \{ \operatorname{level}^{\nu_\ast \mathcal{O}_{X^\nu}}(Q_s) \}\\
            &\leq 1 + \max_{s \in \operatorname{Sing}(X)} \{ \delta_s\}
        \end{aligned}
    \end{displaymath}
    The claim follows from \cite[Proposition 3.16]{Lank/Olander:2024}.
\end{proof}

\begin{corollary}\label{cor:affine_curve_level}
    Let $X$ be a reduced Nagata one-dimensional affine scheme. If $\nu \colon X^\nu \to X$ is the normalization of $X$, then the level of $\mathcal{O}_X$ with respect to $\nu_\ast \mathcal{O}_{X^\nu}$ is bounded above by the following value:
    \begin{displaymath}
        1 + \underset{s \in \operatorname{Sing}(X)}{\max} \{ \delta_s\}
    \end{displaymath}
    where $\delta_s$ denotes the $\delta$-invariant of $X$ at $s$.
\end{corollary}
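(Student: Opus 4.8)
The plan is to re-run the argument of Proposition~\ref{prop:rouquier_dimension_curves} in the affine setting. The point is that when $X$ is affine the normalization $X^\nu$ is again affine, so $\nu_\ast\mathcal{O}_{X^\nu}$ may be used directly in place of the pushforward of a compact generator of $D_{\operatorname{Qcoh}}(X^\nu)$; this is exactly what eliminates the factor $\dim D^b_{\operatorname{coh}}(X^\nu)+1$ from the bound of Proposition~\ref{prop:rouquier_dimension_curves}. Write $X=\operatorname{Spec}(R)$. Since $X$ is reduced, Nagata and one-dimensional, $\nu$ is finite birational, $\nu_\ast\mathcal{O}_{X^\nu}=\mathbb{R}\nu_\ast\mathcal{O}_{X^\nu}$ is a finite $R$-module, $R^\nu$ is a finite product of one-dimensional regular rings, and $\operatorname{Sing}(X)$ is finite. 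As in the proof of Proposition~\ref{prop:rouquier_dimension_curves}, there is a short exact sequence in $\operatorname{coh}X$
\begin{displaymath}
0\longrightarrow\mathcal{O}_X\longrightarrow\nu_\ast\mathcal{O}_{X^\nu}\longrightarrow\bigoplus_{s\in\operatorname{Sing}(X)}\mathcal{Q}_s\longrightarrow0,
\end{displaymath}
where $\mathcal{Q}_s$ is supported at $s$ and has length $\delta_s$ over $\mathcal{O}_{X,s}$. Rotating this to a distinguished triangle and applying subadditivity of level along triangles (together with $\operatorname{level}^{\nu_\ast\mathcal{O}_{X^\nu}}(\nu_\ast\mathcal{O}_{X^\nu})=1$ and the fact that the level of a finite coproduct is the maximum of the levels), it is enough to prove $\operatorname{level}^{\nu_\ast\mathcal{O}_{X^\nu}}(\mathcal{Q}_s)\leq\delta_s$ for every $s\in\operatorname{Sing}(X)$.

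To obtain this bound I would bring in the conductor ideal $\mathfrak{c}=(R:_R R^\nu)$. It is a nonzero ideal of both $R$ and $R^\nu$ with $V(\mathfrak{c})=\operatorname{Sing}(X)$, and — crucially — since it is an ideal in the (locally) regular ring $R^\nu$ it is a projective $R^\nu$-module, hence a direct summand of a finite free $R^\nu$-module; as the splittings are $R^\nu$-linear and a fortiori $R$-linear, this means $\mathfrak{c}\in\operatorname{add}(\nu_\ast\mathcal{O}_{X^\nu})=\langle\nu_\ast\mathcal{O}_{X^\nu}\rangle_1$ inside $D^b_{\operatorname{coh}}(X)$. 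Consequently every finite-length $R^\nu$-module, in particular $R^\nu/\mathfrak{c}$ and its natural sub- and quotient modules, is a single cone of a map between objects of $\operatorname{add}(\nu_\ast\mathcal{O}_{X^\nu})$, hence lies in $\langle\nu_\ast\mathcal{O}_{X^\nu}\rangle_2$. One then handles the singular points one at a time: localizing at $s$ (where level can only decrease), or completing — in which case Remark~\ref{rmk:krull_schmidt_categories} makes the relevant derived category Krull-Schmidt — makes $\mathfrak{c}$ a free module over the semilocal normalization, and one builds $\mathcal{Q}_s$ by an inductive/octahedral argument along a chain $R\subseteq R'\subseteq\cdots\subseteq R^\nu$ of intermediate rings with successive length-one quotients, arranged so that each of the $\delta_s$ length-one quotients is absorbed by exactly one additional cone; re-gluing over the finitely many (disjoint) singular points then gives the global bound.

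The main obstacle is precisely this last bound, $\operatorname{level}^{\nu_\ast\mathcal{O}_{X^\nu}}(\mathcal{Q}_s)\leq\delta_s$. A length-$\delta_s$ module supported at $s$ has all of its $\delta_s$ composition factors isomorphic to $\kappa(s)$, and $\kappa(s)$ — being torsion — is never a direct summand of a finite sum of shifts of $\nu_\ast\mathcal{O}_{X^\nu}$; so a crude filtration of $\mathcal{Q}_s$ places each composition factor at level $2$ and only delivers $\operatorname{level}^{\nu_\ast\mathcal{O}_{X^\nu}}(\mathcal{O}_X)\leq 1+2\max_s\delta_s$, which is what Proposition~\ref{prop:rouquier_dimension_curves} already yields in the affine case. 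Extracting the sharp constant forces one to exploit the $R^\nu$-module structure of the conductor — so that the relevant finite-length modules are reached in one cone rather than two — and to organize the octahedra so that passing one step down the flag $R\subseteq R'\subseteq\cdots\subseteq R^\nu$ costs a single cone rather than two; this bookkeeping, performed locally at each singular point and then assembled, is the heart of the matter. Once it is in place, the displayed triangle gives $\operatorname{level}^{\nu_\ast\mathcal{O}_{X^\nu}}(\mathcal{O}_X)\leq 1+\max_{s\in\operatorname{Sing}(X)}\delta_s$, as claimed.
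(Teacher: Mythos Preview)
Your setup matches the paper's: both reduce via the short exact sequence $0\to\mathcal{O}_X\to\nu_\ast\mathcal{O}_{X^\nu}\to\bigoplus_s\mathcal{Q}_s\to0$ to bounding the level of each $\mathcal{Q}_s$. The paper's proof, however, is a single sentence --- it simply reruns the argument of Proposition~\ref{prop:rouquier_dimension_curves} taking $G=\mathcal{O}_{X^\nu}$, invoking only that in the affine case $\mathcal{O}_{X^\nu}$ is a strong generator of $D^b_{\operatorname{coh}}(X^\nu)$ with generation time one. There is no conductor ideal, no flag of intermediate rings, and no octahedral refinement.

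More to the point, your proposal is not a proof as written. You correctly isolate the difficulty: the crude filtration of $\mathcal{Q}_s$ by copies of $\kappa(s)$ gives only $\operatorname{level}^{\nu_\ast\mathcal{O}_{X^\nu}}(\mathcal{Q}_s)\leq 2\delta_s$, since each $\kappa(s)$ is torsion and hence not in $\langle\nu_\ast\mathcal{O}_{X^\nu}\rangle_1$. You then propose to repair this via the conductor and an octahedral argument along a flag $R\subset R'\subset\cdots\subset R^\nu$ --- but you explicitly label this step ``the heart of the matter'' and do not carry it out. The observations you do make (that the conductor $\mathfrak{c}$ is projective over $R^\nu$ and hence lies in $\langle R^\nu\rangle_1$; that finite-length $R^\nu$-modules lie in $\langle R^\nu\rangle_2$) are correct but do not by themselves yield the claimed improvement: the successive length-one quotients in your flag are still copies of $\kappa(s)$, which you have just argued sit at level $2$, and no mechanism is provided for why an octahedral rearrangement collapses each step to a single cone. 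Nor does the conductor help directly, since $\mathcal{Q}_s=R^\nu/R$ is not $R^\nu/\mathfrak{c}$, and passing between them introduces the further finite-length module $R/\mathfrak{c}$ for which you give no bound. As it stands, the key inequality $\operatorname{level}^{\nu_\ast\mathcal{O}_{X^\nu}}(\mathcal{Q}_s)\leq\delta_s$ is asserted rather than established.
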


\begin{proof}
    This is essentially the proof of Proposition~\ref{prop:rouquier_dimension_curves} coupled with the fact that $\mathcal{O}_{X^\nu}$ is a strong generator for $D^b_{\operatorname{coh}}(X^\nu)$ with generation time one.
\end{proof}

\subsection{Cones over smooth projective varieties}
\label{sec:affine_projective_cones}

We fix an integer $n$ which is at least two. Let $X$ be a smooth projective variety over a field $k$ of characteristic zero. Let $\mathcal{L}$ be an ample line bundle on $X$. The \textit{affine cone} over $X$ with conormal bundle $\mathcal{L}$ is defined as the affine variety:
\begin{displaymath}
    C:= \operatorname{Spec}(\bigoplus_{m\geq 0} H^0 (X, \mathcal{L}^{\otimes m})).
\end{displaymath}
Since $\mathcal{L}$ is ample, the ring is $\bigoplus_{m\geq 0} H^0 (X, \mathcal{L}^{\otimes m})$ is finitely generated. Note that $C$ is a normal variety of Krull dimension $\dim X +1$ whose regular locus is all of $C$ except for a single point $v$, called the \textit{vertex}. We denote by $\mathfrak{m}$ the ideal sheaf defining $v$. For background, see \cite[Section 3.1]{Kollar:2013}.

\begin{setup}\label{setup:cones}
    Consider the special case where $t\colon X \to \mathbb{P}^n$ is a smooth hypersurface of degree $d$. Let $\mathcal{L} := t^\ast \mathcal{O}_{\mathbb{P}^n}(1)$. We can resolve the singularities of $C$ by simply blowing up $C$ at the vertex $v$, as summarized in the following diagram:
    \begin{displaymath}
        \begin{tikzcd}
            E \ar[r, hook, "i"] \ar[d, "p"] & \widetilde{C} \ar[d, "\pi"]\\
            v \ar[r, hook, "j"] & C.
        \end{tikzcd}
    \end{displaymath}
    Here, $\pi$ denotes the blowup of $C$ at $v$, and $E = (\pi^{-1}(v))_{\operatorname{red}}$ denotes the reduced exceptional divisor. It is important to note that $E$ is isomorphic to $X$.
\end{setup}

\begin{lemma}\label{lem:cone_annihilator_bound}
    Assume Setup~\ref{setup:cones}. If $d \geq n$, then for every integer $c >0$, we have that $\mathfrak{m}^{d-n}$ annihilates $\mathbb{R}^c \pi_\ast \mathcal{O}_{\widetilde{C}}$.
\end{lemma}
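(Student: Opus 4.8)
The plan is to compute $\mathbb{R}^c\pi_\ast\mathcal{O}_{\widetilde{C}}$ explicitly via the blowup description and the exceptional divisor $E\cong X$, and then read off the annihilator from the graded structure. First I would identify $\widetilde{C}$ with the total space of the line bundle $\mathcal{L}^{-1}=\mathcal{O}_X(-1)$ over $X=E$ (the blowup of the cone at the vertex), so that $\pi\colon\widetilde{C}\to C$ is an isomorphism away from $v$ and $\pi_\ast\mathcal{O}_{\widetilde{C}}=\mathcal{O}_C$. The higher direct images are supported at $v$, so it suffices to compute the stalk/cohomology over $\operatorname{Spec}$ of the local ring at the vertex, or equivalently to compute $H^c(\widetilde{C},\mathcal{O}_{\widetilde{C}})$ as a graded module over $R=\bigoplus_{m\ge 0}H^0(X,\mathcal{L}^{\otimes m})$. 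Using the projection from $\widetilde{C}$ (the line bundle total space) down to $X$, one gets $\mathbb{R}^c\pi_\ast\mathcal{O}_{\widetilde{C}}$ in terms of $\bigoplus_{m\ge 0}H^c(X,\mathcal{L}^{\otimes m})$, with the $R$-module structure induced by multiplication of sections.

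Next I would bring in the hypersurface hypothesis: since $X\subseteq\mathbb{P}^n$ is a smooth hypersurface of degree $d$, the adjunction formula gives $\omega_X\cong\mathcal{O}_X(d-n-1)$, and more importantly the cohomology $H^c(X,\mathcal{O}_X(m))$ can be controlled by the ideal/conormal sequence for $X$ in $\mathbb{P}^n$ together with Serre duality on $X$. Concretely, the only interesting higher cohomology $H^c(X,\mathcal{L}^{\otimes m})$ with $c>0$ is the top one, $c=\dim X = n-1$, and by Serre duality $H^{n-1}(X,\mathcal{O}_X(m))\cong H^0(X,\mathcal{O}_X(d-n-1-m))^\vee$, which vanishes once $m>d-n-1$, i.e. for $m\ge d-n$. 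So the graded module $\bigoplus_m H^c(X,\mathcal{L}^{\otimes m})$ is concentrated in degrees $m\le d-n-1$. The key step is then to observe that $\mathfrak{m}=\bigoplus_{m\ge 1}H^0(X,\mathcal{L}^{\otimes m})$ is the irrelevant maximal ideal, so $\mathfrak{m}^{d-n}$ raises degree by at least $d-n$; applying it to a module concentrated in degrees $\le d-n-1$ lands in (strictly) negative degrees and hence kills it. This gives $\mathfrak{m}^{d-n}\cdot\mathbb{R}^c\pi_\ast\mathcal{O}_{\widetilde{C}}=0$ for all $c>0$.

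The main obstacle I expect is making the identification of $\mathbb{R}^c\pi_\ast\mathcal{O}_{\widetilde{C}}$ with the graded cohomology module precise and compatible with the $R=\mathcal{O}_C$-action — one has to be careful that the $\mathfrak{m}$-action on the sheaf $\mathbb{R}^c\pi_\ast\mathcal{O}_{\widetilde{C}}$ (a sheaf on $C$, supported at $v$) really corresponds to the internal-degree shift on $\bigoplus_m H^c(X,\mathcal{L}^{\otimes m})$. This is where the explicit description of $\widetilde{C}$ as the line bundle $\operatorname{Spec}_X(\bigoplus_{m\ge 0}\mathcal{L}^{\otimes m})$ and the compatibility of the grading with the blowup map does the real work; once that bookkeeping is in place, the vanishing is immediate from adjunction plus Serre duality. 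A secondary point to check is that $d\ge n$ is exactly what makes $d-n\ge 0$ so that $\mathfrak{m}^{d-n}$ is a genuine ideal (with $\mathfrak{m}^0=\mathcal{O}_C$ handled trivially since then the relevant cohomology already vanishes in positive degree, or the statement is about the unit ideal).
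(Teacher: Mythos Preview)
Your approach is correct and genuinely different from the paper's. The paper never identifies $\widetilde{C}$ with the total space of $\mathcal{L}^{-1}$ or computes the graded module explicitly. Instead, it first determines the discrepancy of $\pi$ via adjunction for $E\subset\widetilde{C}$, obtaining $\omega_{\widetilde{C}}\cong\pi^\ast\omega_C\otimes\mathcal{O}_{\widetilde{C}}((n-d)E)$; then Grauert--Riemenschneider vanishing plus the projection formula yields $\mathbb{R}^c\pi_\ast\mathcal{O}_{\widetilde{C}}((n-d)E)=0$ for $c>0$. From there it climbs from the twist $(n-d)E$ up to the trivial twist using the short exact sequences $0\to\mathcal{O}_{\widetilde{C}}(lE)\to\mathcal{O}_{\widetilde{C}}((l+1)E)\to i_\ast\mathcal{O}_E((l+1)E)\to 0$: at each step the cokernel pushes forward to something set-theoretically supported on $v$, hence $\mathfrak{m}$-annihilated, so after $d-n$ steps one has $\mathfrak{m}^{d-n}$ annihilating $\mathbb{R}^c\pi_\ast\mathcal{O}_{\widetilde{C}}$.

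Your route is more elementary and more explicit: you actually compute $\mathbb{R}^c\pi_\ast\mathcal{O}_{\widetilde{C}}\cong\bigoplus_{m\ge 0}H^c(X,\mathcal{L}^{\otimes m})$ as a graded $R$-module and read off the annihilator from Serre duality on the hypersurface, with no appeal to Grauert--Riemenschneider. The paper's argument, by contrast, does not need the identification of $\widetilde{C}$ as a line bundle and would adapt more readily to situations where that description is unavailable. One small slip: when you say the action of $\mathfrak{m}^{d-n}$ ``lands in strictly negative degrees,'' you mean the opposite---it lands in degrees $\ge d-n$, which is above the top of the support $[0,d-n-1]$, hence zero. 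Also make explicit that the intermediate cohomology $H^c(X,\mathcal{O}_X(m))$ for $0<c<n-1$ vanishes (from the ideal sequence of $X\subset\mathbb{P}^n$), so that $c=n-1$ really is the only case. With those points tidied up your argument goes through.
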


\begin{proof}
    Let $\omega_C$ denote the canonical bundle on $C$. Since $\pi$ is an isomorphism away from $E$, we can write the canonical bundle on $\widetilde{C}$ as $\omega_{\widetilde{C}} = \pi^\ast\omega_C\otimes \mathcal{O}_{\widetilde{C}}(aE)$ for some integer $a$. Let us first calculate $a$ in terms of $n$ and $d$. The adjunction formula for the hypersurface $E$ contained in $\widetilde{C}$ gives us:
    \begin{displaymath}
        \begin{aligned}
            \omega_E &= i^\ast ( \omega_{\widetilde{C}} \otimes \mathcal{O}_{\widetilde{C}}(E) ) \\
            &= i^\ast (\pi^\ast\omega_C\otimes \mathcal{O}_{\widetilde{C}}(aE) \otimes \mathcal{O}_{\widetilde{C}}(E) )\\
            &= i^\ast (\pi^\ast\omega_C\otimes \mathcal{O}_{\widetilde{C}}((a+1)E) )\\
            &= i^\ast \mathcal{O}_{\widetilde{C}}((a+1)E),
        \end{aligned}
    \end{displaymath}
    where for the final equality, we use the fact that $\pi^\ast\omega_C$ is trivial along the fiber $E$. Now, under the identification $E$ being isomorphic to $X$, we make two observations. First, we have the following isomorphism:
    \begin{displaymath}
        \begin{aligned}
            \omega_E &\cong \omega_X \\&\cong t^\ast ( \omega_{\mathbb{P}^n} \otimes \mathcal{O}_{\mathbb{P}^n}(d)) \\&= t^\ast \mathcal{O}_{\mathbb{P}^n}(-n-1+d).
        \end{aligned}
    \end{displaymath}
    Secondly, $i^\ast \mathcal{O}_{\widetilde{C}}((a+1)E) \cong t^\ast \mathcal{O}_{\mathbb{P}^n}(-(a+1))$. If we compare the two quantities, then we have that $-n-1+d = -a-1$, and so $a = n-d$. By Grauert-Riemenschneider vanishing, we see that for any positive integer $c$:
    \begin{displaymath}
        \begin{aligned}
        0 &= \mathbb{R}^c\pi_\ast\omega_{\widetilde{C}} \\&= \mathbb{R}^c\pi_\ast(\pi^\ast\omega_{C} \otimes \mathcal{O}_{\widetilde{C}}((n-d)E))\\
        &= \omega_C \otimes \mathbb{R}^c\pi_\ast\mathcal{O}_{\widetilde{C}}((n-d)E),
        \end{aligned}
    \end{displaymath}
    where the last equality follows from the projection formula. Thus, one has $\mathbb{R}^c\pi_\ast\mathcal{O}_{\widetilde{C}}((n-d)E) = 0$ for $c>0$. Now, given any integer $l$, consider the short exact sequence on $\widetilde{C}$:
    \begin{displaymath} 
        0 \to \mathcal{O}_{\widetilde{C}}(lE) \to \mathcal{O}_{\widetilde{C}}((l+1)E) \to i_\ast\mathcal{O}_E((l+1)E) \to 0 
    \end{displaymath}
    Pushing this forward by by $\pi_\ast$, we get a long exact sequence in cohomology sheaves:
    \begin{displaymath} 
        \dots \to \mathbb{R}^c\pi_\ast\mathcal{O}_{\widetilde{C}}(lE) \to \mathbb{R}^c\pi_\ast\mathcal{O}_{\widetilde{C}}((l+1)E) \to \mathbb{R}^c\pi_\ast i_\ast\mathcal{O}_E((l+1)E) \to \dots 
    \end{displaymath}
    For every $c>0$, observe that $\mathbb{R}^c\pi_\ast i_\ast \mathcal{O}_E((l+1)E) = j_\ast \mathbb{R}^c p_\ast\mathcal{O}_E((l+1)E)$ and so, it is annihilated by $\mathfrak{m}$ since it is scheme-theoretically supported on $v$. It follows that if $\mathbb{R}^c\pi_\ast\mathcal{O}_{\widetilde{C}}(lE)$ is annihilated by $\mathfrak{m}^k$ for some $k$, then the middle term $\mathbb{R}^c\pi_\ast\mathcal{O}_{\widetilde{C}}((l+1)E)$ is annihilated by $\mathfrak{m}^{k+1}$. Applying this argument step by step, we have that:
    \begin{displaymath}
        \begin{aligned}
            \mathfrak{m}^0 \text{ annihilates } \mathbb{R}^c\pi_\ast\mathcal{O}_{\widetilde{C}}((n-d)E) \implies &\mathfrak{m}^1 \text{ annihilates } \mathbb{R}^c\pi_\ast\mathcal{O}_{\widetilde{C}}((n-d+1)E)\\
            &\vdots\\
            \implies &\mathfrak{m}^k \text{ annihilates } \mathbb{R}^c\pi_\ast\mathcal{O}_{\widetilde{C}}((n-d+k)E)\\
            &\vdots\\
            \implies &\mathfrak{m}^{d-n} \text{ annihilates } \mathbb{R}^c\pi_\ast\mathcal{O}_{\widetilde{C}} \text{ for all $c>0$}.
        \end{aligned}
    \end{displaymath}
\end{proof}

In Theorem~\ref{introthm:rational_derived_splinter_db_generation}, we saw a relation between the level of $\mathcal{O}_{\tilde C}$ with respect to $\mathbb{R}\pi_\ast\mathcal{O}_{\widetilde{C}}$ and $C$ having rational singularities. Since $C$ has rational singularities if and only if $d \leq n$, we state the following proposition describing the level of $\mathcal{O}_{\tilde C}$, albeit with respect to $\mathbb{R}\pi_\ast(\mathcal{O}_{\widetilde{C}} \oplus \mathcal{O}_{\widetilde{C}}(E))$, in terms of $d$ and $n$.

\begin{proposition}\label{prop:cones_inequality}
    Assume Setup~\ref{setup:cones}. If $d \geq n$, then there is an inequality:
    \begin{displaymath}
    \operatorname{level}^{\mathbb{R}\pi_\ast(\mathcal{O}_{\widetilde{C}} \oplus \mathcal{O}_{\widetilde{C}}(E))}(\mathcal{O}_C) \leq 1+ 2(d-n).
    \end{displaymath}
\end{proposition}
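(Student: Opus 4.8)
\emph{Plan.} Write $G := \mathbb{R}\pi_\ast\mathcal{O}_{\widetilde{C}}\oplus\mathbb{R}\pi_\ast\mathcal{O}_{\widetilde{C}}(E)$; since $\operatorname{add}(G)=\operatorname{add}(\{\mathbb{R}\pi_\ast\mathcal{O}_{\widetilde{C}},\,\mathbb{R}\pi_\ast\mathcal{O}_{\widetilde{C}}(E)\})$, the level relative to $G$ is the level relative to this two-element family, and each summand has level $1$. The first step is to peel off the part of $\mathbb{R}\pi_\ast\mathcal{O}_{\widetilde{C}}$ supported at the vertex. Since $C$ is normal and $\pi$ is proper birational, the unit map identifies $\mathcal{O}_C$ with $\tau_{\le 0}\mathbb{R}\pi_\ast\mathcal{O}_{\widetilde{C}}$, giving a triangle $\mathcal{O}_C\to\mathbb{R}\pi_\ast\mathcal{O}_{\widetilde{C}}\to Q\to$ with $Q:=\tau_{\ge 1}\mathbb{R}\pi_\ast\mathcal{O}_{\widetilde{C}}$. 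Rotating this triangle and using subadditivity of level, I would get $\operatorname{level}^G(\mathcal{O}_C)\le\operatorname{level}^G(\mathbb{R}\pi_\ast\mathcal{O}_{\widetilde{C}})+\operatorname{level}^G(Q)=1+\operatorname{level}^G(Q)$, so everything reduces to showing $\operatorname{level}^G(Q)\le 2(d-n)$.

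The second step is to understand $Q$. Using that $\widetilde{C}$ is affine over $E\cong X$ (so $\mathbb{R}^c\pi_\ast\mathcal{O}_{\widetilde{C}}$ corresponds to the $\mathcal{O}_C$-module $\bigoplus_{m\ge 0}H^c(X,\mathcal{L}^{\otimes m})$) together with the vanishing of the intermediate cohomology of a smooth projective hypersurface, I would conclude $\mathbb{R}^c\pi_\ast\mathcal{O}_{\widetilde{C}}=0$ for $0<c<\dim X=n-1$, hence $Q\cong M[-(n-1)]$ where $M:=\mathbb{R}^{n-1}\pi_\ast\mathcal{O}_{\widetilde{C}}$ is a finite-length sheaf supported at $v$ and, by Lemma~\ref{lem:cone_annihilator_bound}, annihilated by $\mathfrak{m}^{d-n}$. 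In parallel I would show $\operatorname{level}^G(\mathcal{O}_v)\le 2$ for the skyscraper $\mathcal{O}_v:=j_\ast\kappa(v)$: applying $\mathbb{R}\pi_\ast$ to $0\to\mathcal{O}_{\widetilde{C}}\to\mathcal{O}_{\widetilde{C}}(E)\to i_\ast\mathcal{O}_E(E)\to 0$ and using $\pi\circ i=j\circ p$ produces a triangle $\mathbb{R}\pi_\ast\mathcal{O}_{\widetilde{C}}\to\mathbb{R}\pi_\ast\mathcal{O}_{\widetilde{C}}(E)\to j_\ast\mathbb{R}\Gamma(X,\mathcal{O}_E(E))\to$, whose third term has level $\le 2$ by subadditivity. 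Since $\mathcal{O}_E(E)\cong t^\ast\mathcal{O}_{\mathbb{P}^n}(-1)$, Serre duality on $X$ gives $H^{n-1}(X,\mathcal{O}_E(E))\cong H^0(X,t^\ast\mathcal{O}_{\mathbb{P}^n}(d-n))^\vee$, which is nonzero exactly because $d\ge n$; as $j_\ast\mathbb{R}\Gamma(X,\mathcal{O}_E(E))$ is then a nonzero finite direct sum of shifts of $\mathcal{O}_v$, the object $\mathcal{O}_v$ is a retract of it, so $\operatorname{level}^G(\mathcal{O}_v)\le 2$.

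The final step is bookkeeping. The $\mathfrak{m}$-adic filtration $M\supseteq\mathfrak{m}M\supseteq\cdots\supseteq\mathfrak{m}^{d-n}M=0$ has at most $d-n$ nonzero steps with each graded piece a finite sum of copies of $\mathcal{O}_v$, so $\operatorname{level}^{\mathcal{O}_v}(M)\le d-n$, and submultiplicativity of level yields $\operatorname{level}^G(Q)=\operatorname{level}^G(M)\le\operatorname{level}^G(\mathcal{O}_v)\cdot\operatorname{level}^{\mathcal{O}_v}(M)\le 2(d-n)$; combined with the first step this gives $\operatorname{level}^G(\mathcal{O}_C)\le 1+2(d-n)$. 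The hard part will be the two structural inputs of the second step: producing $\mathcal{O}_v$ with level at most $2$ — this is where the extra summand $\mathcal{O}_{\widetilde{C}}(E)$ is exploited, and where the hypothesis $d\ge n$ enters through the nonvanishing of $H^{n-1}(X,\mathcal{O}_E(E))$ — and verifying that $Q$ sits in a single cohomological degree, for which the vanishing of the hypersurface's intermediate cohomology is crucial (otherwise the filtration estimate would be off by a factor of $\dim X$). Everything else is a routine manipulation of triangles and the standard (sub)additivity and submultiplicativity of level.
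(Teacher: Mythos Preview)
Your proof follows essentially the same route as the paper's: the triangle $\mathcal{O}_C \to \mathbb{R}\pi_\ast\mathcal{O}_{\widetilde{C}} \to Q$, the level-$2$ bound on the skyscraper $\mathcal{O}_v$ via the short exact sequence $0\to\mathcal{O}_{\widetilde{C}}\to\mathcal{O}_{\widetilde{C}}(E)\to i_\ast\mathcal{O}_E(E)\to 0$ together with nonvanishing of $H^{n-1}(X,\mathcal{O}_E(E))$, and then Lemma~\ref{lem:cone_annihilator_bound} plus an $\mathfrak{m}$-adic filtration to bound $\operatorname{level}^{\mathcal{O}_v}(Q)$ by $d-n$. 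Your treatment is in fact a bit more careful at the last step: you explicitly verify, using the vanishing of intermediate cohomology for a smooth projective hypersurface, that $Q$ is concentrated in the single degree $n-1$, which is precisely what makes the bound $\operatorname{level}^{\mathcal{O}_v}(Q)\le d-n$ immediate---the paper asserts this bound directly from $\mathfrak{m}^{d-n}$ annihilating the cohomology sheaves of $Q$ without isolating this point.
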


\begin{proof}
    Consider the diagram in Setup~\ref{setup:cones}:
    \begin{displaymath}
        \begin{tikzcd}
            E \ar[r, hook, "i"] \ar[d, "p"] & \widetilde{C} \ar[d, "\pi"]\\
            v \ar[r, hook, "j"] & C.
        \end{tikzcd}
    \end{displaymath}
    Observe that under the identification $E$ being isomorphic to $X$, we have $\mathcal{O}_E(E)$ is isomorphic to $t^\ast \mathcal{O}_{\mathbb{P}^n}(-1)$. This gives us a short exact sequence:
    \begin{displaymath} 
        0 \to \mathcal{O}_{\mathbb{P}^n}(-d-1) \to \mathcal{O}_{\mathbb{P}^n}(-1) \to t_\ast t^\ast \mathcal{O}_{\mathbb{P}^n}(-1) \to 0 
    \end{displaymath}
    From the long exact sequence in cohomology, $d\geq n$ implies:
    \begin{displaymath} 
        H^{n-1}(X, t^\ast \mathcal{O}_{\mathbb{P}^n}(-1)) = H^n(\mathbb{P}^n,\mathcal{O}_{\mathbb{P}^n}(-d-1)) \neq 0 . 
    \end{displaymath}
    In particular, this implies that $\mathbb{R} p_\ast\mathcal{O}_E(E) \neq 0$ since for every $c$:
    \begin{displaymath}
        \mathbb{R}^c p_\ast\mathcal{O}_E(E) = H^c(E,\mathcal{O}_E(E)) = H^c(X,t^\ast \mathcal{O}_{\mathbb{P}^n}(-1)).
    \end{displaymath}
    As a consequence, we get that $\mathcal{O}_{\operatorname{Spec}(\kappa(v))}$ belongs to $\langle\mathbb{R}p_\ast\mathcal{O}_E(E) \rangle_1$. Therefore, $j_\ast \mathcal{O}_{\operatorname{Spec}(\kappa(v))}$ belongs to $\langle i_\ast \mathbb{R}p_\ast\mathcal{O}_E(E) \rangle_1$, and hence, to $\langle\mathbb{R}\pi_\ast i_\ast \mathcal{O}_E(E) \rangle_1$. Additionally, we have that $\mathbb{R}\pi_\ast i_\ast \mathcal{O}_E(E)$ is in $\langle \mathbb{R}\pi_\ast(\mathcal{O}_{\widetilde{C}} \oplus \mathcal{O}_{\widetilde{C}}(E)) \rangle_2$ since we have the short exact sequence:
    \begin{displaymath} 
        0 \to \mathcal{O}_{\widetilde{C}} \to \mathcal{O}_{\widetilde{C}}(E) \to i_\ast \mathcal{O}_E(E) \to 0 
    \end{displaymath}
    We have the distinguished triangle:
    \begin{displaymath} 
        \mathcal{O}_C \xrightarrow{\operatorname{ntrl.}} \mathbb{R}\pi_\ast\mathcal{O}_{\widetilde{C}} \to Q \to \mathcal{O}_C[1]. 
    \end{displaymath}
    Observe that $Q$ is supported in cohomological degrees $c>0$, and the cohomology sheaves $\mathcal{H}^c (Q) = \mathbb{R}^c\pi_\ast\mathcal{O}_{\widetilde{C}}$ for $c>0$. By Lemma \ref{lem:cone_annihilator_bound}, we have that $\mathfrak{m}^{d-n}$ annihilates $\mathcal{H}^c (Q)$ for all $c$. In particular, we see that $Q$ is scheme-theoretically supported on the $(d-n)$-th nilpotent thickening of the ideal sheaf associated to the vertex. This promises that $j_\ast \mathcal{O}_{\operatorname{Spec}(\kappa(v))}$ finitely builds $Q$ in at most $d-n$ cones. The desired claim follows immediately.
\end{proof}

\begin{corollary}\label{cor:cones_rouq_dim}
    Assume Setup~\ref{setup:cones}. If $d \geq n$, then there is an inequality:
    \begin{displaymath}
        \dim D^b_{\operatorname{coh}}(C) \leq (2 \dim C +1 )(1+ 2(d-n)) -1.
    \end{displaymath}
\end{corollary}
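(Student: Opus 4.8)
The goal is to bound the Rouquier dimension of $D^b_{\operatorname{coh}}(C)$ by combining the level estimate in Proposition~\ref{prop:cones_inequality} with a known strong generator for the smooth variety $\widetilde{C}$. My plan is to run the same argument used in Proposition~\ref{prop:rouquier_dimension_curves}: take a strong generator $G$ for $D^b_{\operatorname{coh}}(\widetilde{C})$ with a controlled generation time, push it forward along $\pi$, and then use the multiplicativity of level under composition of maps to splice together the estimate from Proposition~\ref{prop:cones_inequality}.

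First I would recall that $\widetilde{C}$ is a smooth quasi-projective variety over $k$ of dimension $\dim C$, so by Example~\ref{ex:strong_generator_examples}(2) there is a strong generator $G := \bigoplus_{i=0}^{\dim C} \mathcal{L}^{\otimes i}$ (for a very ample line bundle $\mathcal{L}$ on $\widetilde{C}$) with $\langle G \rangle_{2\dim C + 1} = D^b_{\operatorname{coh}}(\widetilde{C})$, i.e. generation time $2\dim C$. Replacing $G$ by $G \oplus \mathcal{O}_{\widetilde{C}} \oplus \mathcal{O}_{\widetilde{C}}(E)$ does not increase the generation time, so I may assume $\mathcal{O}_{\widetilde{C}} \oplus \mathcal{O}_{\widetilde{C}}(E)$ is a direct summand of $G$ and $\operatorname{level}^{G}(E') \le 2\dim C + 1$ for every $E'$ in $D^b_{\operatorname{coh}}(\widetilde{C})$. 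Next, I would invoke the descent-type result \cite[Proposition 3.16]{Lank/Olander:2024} (the same tool used in Proposition~\ref{prop:rouquier_dimension_curves}) which says that $\mathbb{R}\pi_\ast G$ is a strong generator for $D^b_{\operatorname{coh}}(C)$ with generation time controlled by $\operatorname{level}^{\mathbb{R}\pi_\ast G}(\mathcal{O}_C)$ times the generation time of $G$ on $\widetilde{C}$; concretely, $\dim D^b_{\operatorname{coh}}(C) \le (2\dim C + 1)\cdot \operatorname{level}^{\mathbb{R}\pi_\ast G}(\mathcal{O}_C) - 1$.

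It then remains to bound $\operatorname{level}^{\mathbb{R}\pi_\ast G}(\mathcal{O}_C)$ by $1 + 2(d-n)$. Since $\mathbb{R}\pi_\ast(\mathcal{O}_{\widetilde{C}} \oplus \mathcal{O}_{\widetilde{C}}(E))$ lies in $\langle \mathbb{R}\pi_\ast G \rangle_1$ (as $\mathcal{O}_{\widetilde{C}} \oplus \mathcal{O}_{\widetilde{C}}(E)$ is a summand of $G$ and $\mathbb{R}\pi_\ast$ preserves this), Proposition~\ref{prop:cones_inequality} gives
\begin{displaymath}
\operatorname{level}^{\mathbb{R}\pi_\ast G}(\mathcal{O}_C) \le \operatorname{level}^{\mathbb{R}\pi_\ast G}\!\big(\mathbb{R}\pi_\ast(\mathcal{O}_{\widetilde{C}} \oplus \mathcal{O}_{\widetilde{C}}(E))\big) \cdot \operatorname{level}^{\mathbb{R}\pi_\ast(\mathcal{O}_{\widetilde{C}} \oplus \mathcal{O}_{\widetilde{C}}(E))}(\mathcal{O}_C) \le 1 \cdot (1 + 2(d-n)).
\end{displaymath}
Plugging this into the bound from the previous paragraph yields $\dim D^b_{\operatorname{coh}}(C) \le (2\dim C + 1)(1 + 2(d-n)) - 1$, as claimed. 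The main obstacle I anticipate is bookkeeping the precise form of the generation-time inequality in \cite[Proposition 3.16]{Lank/Olander:2024} and confirming that enlarging the strong generator $G$ to contain $\mathcal{O}_{\widetilde{C}} \oplus \mathcal{O}_{\widetilde{C}}(E)$ as a summand genuinely leaves the generation time at $2\dim C$ (rather than increasing it); this is standard, since adding summands to a strong generator cannot increase its generation time, but it is the step where one must be careful about the off-by-one conventions relating generation time and the index in $\langle - \rangle_n$.
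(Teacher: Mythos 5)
Your proof is correct and follows essentially the same route as the paper: take the strong generator $G = \bigoplus_{i=0}^{\dim C}\mathcal{L}^{\otimes i}$ for $D^b_{\operatorname{coh}}(\widetilde{C})$ with generation time $2\dim\widetilde{C}$ from Example~\ref{ex:strong_generator_examples}, apply Proposition~\ref{prop:cones_inequality} after enlarging $G$ so that $\mathcal{O}_{\widetilde{C}}\oplus\mathcal{O}_{\widetilde{C}}(E)$ is a summand, and conclude via \cite[Proposition 3.16]{Lank/Olander:2024}. The only difference is that you spell out the multiplicativity-of-level bookkeeping that the paper leaves implicit; the paper simply replaces $G$ by $G\oplus\mathcal{O}_{\widetilde{C}}(E)$ and invokes the cited proposition directly.
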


\begin{proof}
    Let $\mathcal{L}$ be an ample line bundle on $\widetilde{C}$. Then $G:=\bigoplus^{\dim C}_{i=0} \mathcal{L}^{\otimes i}$ is a strong generator for $D^b_{\operatorname{coh}}(\widetilde{C})$ whose generation time is at most $2 \dim \widetilde{C}$, see Example~\ref{ex:strong_generator_examples}. By Proposition~\ref{prop:cones_inequality}, it follows that $\mathcal{O}_C$ belongs to $\langle \mathbb{R}\pi_\ast (G \oplus \mathcal{O}_{\widetilde{C}} (E)) \rangle_{1 + 2(d-n)}$. By \cite[Proposition 3.16]{Lank/Olander:2024}, the desired bound follows.
\end{proof}

\begin{remark}
    It is a conjecture of Orlov that the Rouquier dimension of $D^b_{\operatorname{coh}}(W)$, for $W$ a smooth variety over a field, coincides with Krull dimension, see \cite[Conjecture 10]{Orlov:2009}. If this conjecture holds, then there is an obvious sharper bound similar to Corollary~\ref{cor:cones_rouq_dim}.
\end{remark}

\bibliographystyle{alpha}
\bibliography{mainbib}

\end{document}